\title{Virtual classes for hypersurfaces via two-periodic complexes}
\author{Mark Shoemaker}
\address{
Department of Mathematics, Colorado State University, 1874 Campus Delivery, Fort Collins, CO, USA, 80523-1874
}
\email{mark.shoemaker@colostate.edu} 
\thanks{This work was partially supported by NSF grant DMS-1708104.}
\subjclass[2010]{Primary: 14N35. Secondary: 53D45, 14E16}
\newtheorem{theorem}{Theorem}[section]
\newtheorem{lemma}[theorem]{Lemma}
\newtheorem{proposition}[theorem]{Proposition}
\newtheorem{corollary}[theorem]{Corollary}
\theoremstyle{remark}
\newtheorem{remark}[theorem]{Remark}
\newtheorem{example}[theorem]{Example}
\newtheorem{definition}[theorem]{Definition}
\newtheorem{assumption}[theorem]{Assumption}
\def\imod#1{\allowbreak\mkern10mu({\operator@font mod}\,\,#1)}
\newcommand{\cc}[1]{\mathcal{#1}}  
\newcommand{\CC}{\mathbb{C}}
\newcommand{\ZZ}{\mathbb{Z}}
\newcommand{\LL}{\mathbb{L}}
\newcommand{\PP}{\mathbb{P}}
\newcommand{\RR}{\mathbb{R}}
\newcommand{\fC}{\mathfrak{C}}
\newcommand{\Bun}{\mathfrak{Bun}}
\newcommand{\sMbar}{\overline{\mathscr{M}}}
\newcommand{\sM}{{\mathscr M}}
\newcommand{\cC}{\cc C}
\newcommand{\cL}{\cc L}
\newcommand{\cN}{\cc N}
\newcommand{\cM}{\cc M}
\newcommand{\cO}{\cc O}
\newcommand{\cV}{\cc V}
\newcommand{\cQ}{\cc Q}
\newcommand{\cA}{\cc A}
\newcommand{\cB}{\cc B}
\newcommand{\uC}{\underline{\cc C}}
\DeclareMathOperator{\st}{st}
\DeclareMathOperator{\vir}{vir}
\DeclareMathOperator{\taut}{taut}
\DeclareMathOperator{\ch}{ch}
\DeclareMathOperator{\td}{Td}
\DeclareMathOperator{\tot}{tot}
\DeclareMathOperator{\rank}{rank}
\DeclareMathOperator{\Sym}{Sym}
\DeclareMathOperator{\Spec}{Spec}
\newcommand{\op}[1]{\operatorname{#1}}
\numberwithin{equation}{section}
\begin{document}

\begin{abstract}
These expository notes are based on a series of lectures given at the May 2018 Snowbird workshop, Crossing the Walls in Enumerative Geometry.  We give an introductory treatment of the notion of a virtual fundamental class in algebraic geometry, and describe a new construction of the virtual fundamental class for Gromov--Witten theory of a hypersurface.  The results presented here are based on joint work with I.~Ciocan-Fontanine, D.~Favero, J.~Gu\'er\'e, and B.~Kim.

\end{abstract}

\maketitle

\tableofcontents

\section*{Introduction}
\subsection{Setup}

Fix $X$ to be a smooth projective variety.  The moduli space
$\sMbar_{g,n}(X,d)$ parametrizes maps \[f\colon C \to X,\] where $C$ is a smooth or nodal curve of genus $g$ with $n$ marked points,  $f$ is a regular map of degree $d$, and the automorphisms of $C$ which preserve $f$ are required to be finite.  One considers integrals over these moduli spaces, with the hopes of obtaining useful invariants of the variety $X$.  

A special class of such integrals yield what are known as \emph{Gromov--Witten invariants} of $X$.  In particularly simple cases these invariants give counts of curves in $X$, although in general such a direct enumerative interpretation is not possible.  As predicted by physicists, these integrals should satisfy a number of remarkable properties.  For instance, they should be invariant under deformations of $X$, and should be compatible with the following natural maps between the moduli spaces curves 
\begin{align}\label{e:gl}
\op{for}\colon &\sMbar_{g,n+1} \to \sMbar_{g,n}, \\ \nonumber
\op{gl}_1 \colon &\sMbar_{g,n+2} \to \sMbar_{g+1,n}, \\
\op{gl}_2  \colon& \sMbar_{g_1, n_1 + 1} \times \sMbar_{g_2,n_2+1} \to \sMbar_{g_1 + g_2,n_1 + n_2}.
\nonumber
\end{align}
in a precise sense.  


In attempting to rigorously define Gromov--Witten invariants, a serious technical challenge arises.  In contrast to $\sMbar_{g,n}$, the moduli space of stable maps $\sMbar_{g,n}(X,d)$ is generally not smooth and often contains many irreducible components of different dimension.  
The fundamental class of $\sMbar_{g,n}(X,d)$ is not well-defined;
it is therefore unclear how one should ``integrate'' over $\sMbar_{g,n}(X,d)$.  



\subsection{The virtual fundamental class}

The solution is the notion of a \emph{virtual fundamental class}, a choice of element in the Chow group of $\sMbar_{g,n}(X,d)$ which behaves in many ways like a fundamental class.  
To be more precise, one hopes to construct an element of $A_*\left( \sMbar_{g,n}(X,d)\right)$ which satisfies a number of properties analogous to those of a fundamental class, together with additional properties reflecting the goals of the previous section.  In particular one requires:
\begin{enumerate}
\item the virtual fundamental class is supported entirely in a single degree of $A_*\left( \sMbar_{g,n}(X,d)\right)$, the so-called \emph{expected dimension} (see Section~\ref{ss:vcp});
\item the virtual fundamental class is compatible with the various gluing and forgetful maps of \eqref{e:gl} (see \cite[Section~2]{Beh2} for precise statements);
\item Gromov--Witten invariants of $X$, defined by integrating certain classes against the virtual fundamental class, are invariant under deformations of $X$.
\end{enumerate}
It is not clear a-priori that such a class should exist, and in general there is no guarantee that a choice of such a class is unique.  The construction of the virtual fundamental class has been a fundamental challenge in Gromov--Witten theory (as well as other curve counting theories).
Unfortunately any construction of the virtual fundamental class is necessarily technical, especially in its full generality.  As such, the virtual fundamental class is one of the more formidable hurdles for graduate students learning the theory.

Constructing the virtual fundamental class has now been successfully carried out in a number of different ways.  In Gromov--Witten theory this has been done by 
Behrend--Fantechi in \cite{BF}, Fukaya--Ono  in \cite{FO}, Li--Tian in \cite{LiT},
Ruan in \cite{RuanV}, 
 and others.

\subsection{Summary}

These expository notes have two complementary goals.  The first is to give an introduction to the concept of a virtual fundamental class.  This is done in Section~\ref{ss:zerolocus}, where we explain the connection to more classical intersection theory.  In Section~\ref{s:proj} we detail the  case of $\sMbar_{g,n}(\PP^r,d)$ where the construction can be  simplified.  

The second goal of these notes is to present a new method, developed by
Ciocan-Fontanine,
Favero,
Gu\'er\'e, 
Kim, and the author in \cite{CFFGKS},
 of constructing a virtual class for the Gromov--Witten theory of a hypersurface using two-periodic complexes of vector bundles.  This is done in Sections~\ref{s:ne0} and~\ref{s:ts}.
   

%

\subsection{Audience} \label{ss:aud}

These notes are intended for an audience who has some familiarity with Gromov--Witten theory, 
 but has perhaps not yet learned the technical details of the virtual fundamental class.
 While this may seem to be a rather specific collection of readers, in our experience it includes a large proportion of graduate students learning Gromov--Witten theory.
We assume readers have a familiarity with algebraic geometry at the level of at least the first three chapters of Hartshorne \cite{Hart}.  A sufficient pre-requisite for the Gromov--Witten theory material is, e.g., the excellent introductory text by Kock and Vainsencher \cite{KV}. 

On the other hand, while Sections~\ref{ss:zerolocus} and~\ref{s:proj} are certainly well-known to many, we hope that the later sections will be of interest to experts in the field, as they provide a new construction of the virtual fundamental class on the space of maps to a hypersurface.

\subsection{A note on generality}
The method described in these notes is a particular case of a more general construction developed 
in \cite{CFFGKS}.  The  setting in which these techniques apply is that of a 
 \emph{gauged linear sigma model} (GLSM), which simultaneously generalizes Gromov--Witten theory of complete intersections as well as FJRW theory \cite{FJR13, PV} of homogeneous singularities. GLSMs were first described in physics by Witten in \cite{Wi2}, while the first mathematical definition was given
by Fan--Jarvis--Ruan in \cite{FJR15}.  

In the particular case of FJRW theory, Polishchuk--Vaintrob used two-periodic complexes (and more generally the derived category of factorizations) to give an algebraic construction of a virtual class in \cite{PVold, PV}.  
The paper \cite{CFFGKS} was inspired by the work in  \cite{FJR15} and \cite{PV}.
In these notes we use two-periodic complexes in a manner analogous to that of \cite{PVold} to define Gromov--Witten-like invariants for hypersurfaces.  For a more detailed discussion of connections to other results, see Section~\ref{s:conc}.

Although we do not make further mention of the GLSM, it is in the background of everything that follows.
 We hope that by focusing on a special case, these notes will provide a simplified roadmap for readers interested in \cite{CFFGKS}.

\subsection{Acknowledgments}  These notes are based on a series of lectures given at the 2018 Crossing the Walls in Enumerative Geometry Summer Workshop, and those lectures are based upon the paper \cite{CFFGKS}.  I thank the organizers of the workshop,
T.~Jarvis,
N.~Priddis, and
Y.~Ruan, as well as
D. Coffman,
the project manager who took care of much of the work required to make the workshop such a success.  
I am grateful to my collaborators on \cite{CFFGKS}, 
I.~Ciocan-Fontanine,
D.~Favero,
J.~Gu\'er\'e, and
B.~Kim.
I thank the University of Minnesota's Department of Mathematics for the use of their facilities during several extended collaborative meetings during the writing of \cite{CFFGKS}.
I am grateful to T.~Jarvis and Y.~Ruan for many discussions explaining their beautiful construction of the GLSM.
Finally, I am grateful to the referee for providing feedback and suggesting many improvements to the first draft of this manuscript.

%



\section{The zero locus of a section}\label{ss:zerolocus}
In this section we describe a relatively simple setting in which a virtual fundamental class arises, the zero locus of a section of a vector bundle.  This is referred to as the \emph{basic model} in \cite[Appendix B]{PandTho}.
In this setting, a virtual fundamental class can be defined using the techniques of intersection theory and is given by a more familiar object, the refined Euler class.
We will see in the next  section that the virtual class for $\sMbar_{g,n}(\PP^r, d)$ is in fact a particular case of this example.  The material of this section is described in detail in \cite[Chapter~6]{Fu}.

Let $X$ be a smooth variety of dimension $n$ and let $E \to X$ be a vector bundle of rank $r$.  Let $s \in \Gamma(X, E)$ be a section, and define $Z := Z(s)$ to be the scheme-theoretic zero locus of the section $s$.  In this setting, $Z$ has an \emph{expected dimension}, given by $n - r$.  If $s$ is a regular section of $E$, then this is the dimension of $Z$.  

We hope to construct a class $[Z]^{\vir}$ in $A_*(Z)$, called the virtual fundamental class, satisfying the following properties:
\begin{enumerate}
\item the class $[Z]^{\vir}$ is supported in the expected dimension: $[Z]^{\vir} \in A_{n-r}(Z)$;
\item \label{defprop} the class $[Z]^{\vir}$ is invariant under deformations in the following sense: if $s'$ is another section of $E$ such that $Z' = Z(s') \subset Z$, then $[Z]^{\vir}$ and $[Z ']^{\vir}$ are equal in $A_*(Z)$;
\item if $s(X)$ intersects the zero section of $E$ transversally, then $[Z]^{\vir} = [Z]$.
\end{enumerate}
Such a class gives an invariant of  $Z$ which remembers the fact that $Z$ arose from a section of $E$.  If $s$ is not transverse to the zero section, one may attempt to construct such a class by deforming $s$ to some $s'$ which is transverse, and such that $Z' = Z(s') \subset Z$.  If such a deformation is possible, one could define the virtual fundamental class of $Z$ to be $[Z '] \in A_*(Z)$.  In many settings, however, such a section does not exist.  One motivation for the deformation property \eqref{defprop} above is computation.  Sometimes one can deform the section $s$ to a section $s'$ for which the virtual class is easier to compute.



If $s \in \Gamma(X, E)$ is not a regular section, then $\dim(Z)$ will be strictly greater than the expected dimension $n-r$.  We explain below how to construct a class in the Chow group of $Z$ which 
satisfies the properties given above.

\begin{example}
Let us consider what is in some sense the worst possible case, that is, $s \in \Gamma(X, E)$ is the zero section $0 \in \Gamma(X, E)$. 
In this case we define
\[[Z]^{\vir} = e(E) \in A_{n-r}(X),\]
where $e(E)$ denotes the \emph{Euler class} (the top Chern class) of $E$ \cite[Chapter 3]{Fu}.  If $X$ is compact, then under the cycle map $A_*(X) \to H_*(X) \cong H^*(X)$, $e(E)$ maps to the topological Euler class of, e.g. \cite{BT}.

If there exists a section $s' \in \Gamma(X, E)$ transverse to the zero section, and  $Z' = Z(s ')$ is the zero locus, then the class $[Z '] \in A_{n-r}(X)$ is equal to $e(E)$.  In this sense $e(E)$ is the correct replacement for $[X]$, as it coincides with the class obtained by deforming $s$ into $s'$ and taking the fundamental class of the deformed zero locus.
\end{example}

For a more general section $s \in \Gamma(X, E)$, not necessarily regular or zero, there is a notion of a \emph{refined Euler class} 
$e(E, s)$ in $A_{n-r}(Z)$.
It agrees with the usual Euler class in the sense that under the pushforward map
\[i_*\colon A_*(Z) \to A_*(X),\]
$e(E, s)$ maps to $e(E)$.  

\begin{definition}\label{conelim}
Given $Z = Z(s)$ as above, the \emph{normal cone} $C_Z X$ is defined to be the limit of the graph of the section $\lambda \cdot s$ as $\lambda \mapsto \infty$.  More precisely, consider the map 
\begin{align*}
X \times \CC &\to E \times \CC^*\\
(x, \lambda) & \mapsto \lambda s(x).
\end{align*}
Let $\Gamma$ denote the image of this map and let $\overline \Gamma$ denote its closure in $E \times \PP^1$.  Define $C_Z X$ to be the intersection $\tot (E) \times \{\infty\}$ with $\overline \Gamma$.
\end{definition}
The scheme $\overline \Gamma \to \CC$ deforms the subvariety $s(X) \subset E$ to $C_Z X$.  If $s(x) \neq 0$ for a given point $x$, $\lambda s(x)$ will go to infinity as $\lambda$ goes to $\infty$.  As a consequence, $C_Z X$ is contained entirely in $E|_Z$.  In \cite[Appendix B.6.6]{Fu} it is shown that $C_Z X$ is of pure dimension $n$.



The projection $\pi|_Z\colon E|_Z \to Z$ is flat, and so defines a map \begin{align*}\pi|_Z^*\colon A_k(Z) \to & A_{k + r}(E|_Z) \\ [V] \mapsto & [\pi|_Z^{-1}(V)].\end{align*}  It is a (nontrivial) fact that this map is an isomorphism.  We denote its inverse by $0|_Z^*\colon A_{k + r}(E|_Z) \to A_k(Z)$, as we may view it as pullback by the zero section  $0|_Z\colon Z \to E|_Z$. 
With this setup, one can define the following:
\begin{definition}\label{d:eref} The refined Euler class of $E \to X$ with respect to $s \in \Gamma(X, E)$ is the pullback of the normal cone of $Z$ in $X$ via the zero section:
\[e(E, s) := 0|_Z^*\left([C_{Z}X]\right).\]
\end{definition}

The pullback preserves codimension, so this class lies in $A_{n-r}(Z)$.
\begin{definition}\label{d:refeu} Define the virtual class 
\[[Z]^{vir} := e(E, s).\]
\end{definition}
It is an exercise to check that this class satisfies the properties desired of a virtual fundamental class, and agrees with the previous definition in the special case where $s$ is transverse to the zero section or $s = 0$.

We will see below that the virtual class for the moduli space of stable maps to projective space is in fact a special case of this construction.


\section{Stable maps to projective space}\label{s:proj}
In this section we construct a virtual fundamental class on $ \sMbar_{g,n}(\PP^r, d)$ using the refined Euler class.  
\subsection{Spaces of sections}\label{ss:sos}
We begin by defining a virtual fundamental class for a space of sections of a vector bundle over a family of curves.  We define the moduli space of sections below and realize it as the zero locus of a section of a vector bundle.  In Section~\ref{ss:smssos} we will see that $ \sMbar_{g,n}(\PP^r, d)$ itself is a space of sections.  

Let $\pi\colon \cC \to S$ be a flat (and proper) family of pre-stable curves lying over a smooth variety (or stack) $S$.  Let $\cV \to \cC$ be a vector bundle on $\cC$.  For each geometric point $s \in S$, the fiber of $\pi$ is a (at worst) nodal curve $C_s$, equipped with a vector bundle $V_s \to C_s$. 
\[
\begin{tikzcd}
 V_s \ar[d] \ar[r, hookrightarrow] & \cV \ar[d] \\
C_s \ar[d, "\pi_s"] \ar[r, hookrightarrow] &  \cC \ar[d, "\pi"]  \\
s \ar[r, hookrightarrow] &  S. 
\end{tikzcd}
\]

\begin{definition}\label{d:sections}
Let $\pi\colon \cC \to S$ and $\cV \to \cC$ be defined as in the introduction to this section.  Define the \emph{space of sections} of $\cV$ to be
\[ \tot(\pi_*\cV) := \Spec \left( \Sym \left( \RR^1 \pi_*(\omega_\pi \otimes \cV^\vee)\right)\right),\]
where $\omega_\pi$ is the relative dualizing sheaf for $\pi$.
\end{definition}
Given a sheaf $\cc A$ over $S$, recall that the $s$-points of $\Spec\left( \Sym(\cc A)\right)$ are the elements of $\hom(\cc A, \cO_s)$.  With this we observe that 

\begin{align} \label{a:spoints}
\tot(\pi_*\cV)|_s &= \hom \left(\RR^1 \pi_*(\omega_\pi \otimes \cV^\vee)_s, \cO_s \right) \\
& = \hom \left(\RR^1 {\pi_s}_*(\omega_{\pi_s} \otimes V_s^\vee), \cO_s \right) \nonumber \\
& = \Gamma(C_s, V_s), \nonumber
\end{align}
justifying the terminology space of sections.
Here the second equality is by base change (see \cite{EGAII}, or \cite[Theorem 1.2]{Oss} for a summary of the case at hand). The third equality is Serre duality.  In fact a stronger statement is true.
\begin{definition}\cite[Definition~2.1]{CL} 
Define the contravariant functor 
\[\op{sec}_{\cV}\colon \op{Schemes} \to \op{Set}\] which sends a scheme $T$ to the set of pairs $(\rho, p)$, where $\rho\colon T \to S$ is a morphism and $p  \in H^0(\cC_T, \rho^*\cL)$ where $\cC_T = T \times_S \cC $ and $\rho^*\cL = \cC_T \times_\cC \cL$.  A morphism between $(p, \rho)$ and $(p', \rho ')$  is given by a morphism $\tau\colon \cC_T \to \cC_T$  such that $\rho ' \circ \tau  = \rho$ and under the induced isomorphism $\rho^*\cL \to \tau^*{\rho '}^* \cL$, $p = \tau^*(p')$.
\end{definition}
By a similar argument to \eqref{a:spoints} above, one obtains the following.
\begin{proposition}\cite[Proposition~2.2]{CL}\label{p:dirimcone}
The stack $ \tot(\pi_*\cV)$ represents the functor $\op{sec}_{\cV}$.  \end{proposition}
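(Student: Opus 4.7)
The plan is to promote the pointwise computation of \eqref{a:spoints} to a computation of $T$-valued points for an arbitrary scheme $T$, using the three ingredients already invoked at the level of geometric points: the universal property of $\Spec\Sym$, cohomology and base change, and Serre duality. The output of the computation will match the functor of points of $\op{sec}_\cV$.

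First, I would fix a morphism $\rho\colon T \to S$ and consider $T$-points of $\tot(\pi_*\cV)$ lying over $\rho$. By the universal property of the relative $\Spec\Sym$, these are in natural bijection with $\cO_S$-module maps $R^1\pi_*(\omega_\pi \otimes \cV^\vee) \to \rho_*\cO_T$, and by adjunction, with
\[
\op{Hom}_{\cO_T}\!\bigl(\rho^*R^1\pi_*(\omega_\pi \otimes \cV^\vee),\, \cO_T\bigr).
\]
Form the base change $\pi_T\colon \cC_T \to T$ and write $\cV_T$ for the pullback of $\cV$. Since $\pi$ is a flat proper family of curves, $R^i\pi_* = 0$ for $i \geq 2$, and the standard cohomology-and-base-change statement (e.g.\ \cite[Theorem 1.2]{Oss}, using that $R^1\pi_*$ is the top nonvanishing direct image) gives a canonical isomorphism
\[
\rho^*R^1\pi_*(\omega_\pi \otimes \cV^\vee) \;\cong\; R^1\pi_{T*}(\omega_{\pi_T} \otimes \cV_T^\vee),
\]
where I use the fact that the relative dualizing sheaf commutes with base change for families of nodal curves.

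Second, I would apply relative Serre duality to identify
\[
\op{Hom}_{\cO_T}\!\bigl(R^1\pi_{T*}(\omega_{\pi_T}\otimes \cV_T^\vee),\, \cO_T\bigr) \;\cong\; H^0(\cC_T,\cV_T),
\]
which is precisely the set of sections assigned by $\op{sec}_\cV$ to the pair $(\rho,-)$. Chaining the three bijections yields, functorially in $T$, a bijection between $T$-points of $\tot(\pi_*\cV)$ over $\rho$ and pairs $(\rho,p)$ with $p \in H^0(\cC_T,\rho^*\cV)$. Summing (or rather, sheafifying) over all $\rho$ gives the desired equivalence of functors.

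The main obstacle, such as it is, is the naturality/functoriality of the chain: one needs to check that the base-change isomorphism and the Serre-duality isomorphism are natural with respect to morphisms $T' \to T$ over $S$, and compatible with the automorphisms $\tau\colon \cC_T \to \cC_T$ appearing in the definition of $\op{sec}_\cV$. Both are standard, but writing out the compatibility with $\tau$ cleanly requires being careful about the identification of $\tau^*\rho^*\cV$ with $\rho^*\cV$ induced by $\rho'\circ\tau = \rho$. The actual algebra—the three-step chain—reduces to exactly the calculation in \eqref{a:spoints}, so the heart of the argument is really just ensuring that every step of that calculation already has a relative version valid over an arbitrary base $T$.
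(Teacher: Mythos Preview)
Your proposal is correct and follows exactly the approach the paper indicates: the paper does not give a self-contained proof here but simply says ``By a similar argument to \eqref{a:spoints} above, one obtains the following,'' citing \cite[Proposition~2.2]{CL}. Your write-up is precisely that similar argument---the same three-step chain (universal property of $\Spec\Sym$, cohomology and base change, relative Serre duality) promoted from geometric points to arbitrary $T$-points---so there is nothing to compare.
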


\begin{remark}
The notation $\tot(\pi_*\cV)$ is an abuse of notation as the space of sections is not in general a vector bundle and $\pi_*\cV$ is not locally free.  We hope this will not cause confusion. 
%
\end{remark}

The space $\tot(\pi_* \cV)$ is not usually smooth and in fact is not even of pure dimension.  Nevertheless we can exploit the ideas of Section~\ref{ss:zerolocus} to construct a Chow class on $\tot(\pi_* \cV)$ of relative dimension $\chi(C_s, V_s)$ over $S$.  
We will use this in the next section to define a virtual class for $\sMbar_{g,n}(\PP^r,d)$.

First, we embed $\cV$ into a $\pi$-acyclic vector bundle $\cA \to \cC$.
\begin{lemma}
If $\pi\colon \cC \to S$ is projective, there exists an embedding of $\cV$ into a vector bundle $\cA$ satisfying $\RR^1 \pi_*(\cA) = 0$.
\end{lemma}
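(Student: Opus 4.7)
The plan is to invoke relative Serre vanishing on the projective morphism $\pi$ together with a standard dualization trick to produce $\cA$. First, I would fix a $\pi$-ample line bundle $\cL$ on $\cC$. By relative Serre vanishing applied to $\cV^\vee$ and to $\cO_\cC$, for $n$ sufficiently large one has simultaneously
\begin{enumerate}
\item[(a)] $\RR^1\pi_*(\cV^\vee \otimes \cL^n) = 0$;
\item[(b)] the adjunction map $\pi^*\pi_*(\cV^\vee \otimes \cL^n) \twoheadrightarrow \cV^\vee \otimes \cL^n$ is surjective;
\item[(c)] $\RR^1\pi_*(\cL^n) = 0$.
\end{enumerate}

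Next, I would combine (a) with cohomology and base change to conclude that $\cE := \pi_*(\cV^\vee \otimes \cL^n)$ is a vector bundle on $S$: flatness of $\pi$ makes the relative Euler characteristic locally constant, and the vanishing of $\RR^1$ forces this to be concentrated in $h^0$. Tensoring the surjection in (b) with $\cL^{-n}$ gives $\pi^*\cE \otimes \cL^{-n} \twoheadrightarrow \cV^\vee$, and dualizing (which is exact on this particular surjection because both sheaves are locally free) yields the desired embedding
$$\cV \hookrightarrow \pi^*(\cE^\vee) \otimes \cL^n =: \cA.$$

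Finally, I would verify $\RR^1\pi_*\cA = 0$ using the projection formula together with (c):
$$\RR^1\pi_*\bigl(\pi^*\cE^\vee \otimes \cL^n\bigr) \;\cong\; \cE^\vee \otimes \RR^1\pi_*\cL^n \;=\; 0.$$
There is no serious analytic obstacle; the argument is a textbook application of relative Serre vanishing. The main point requiring care is that $S$ is allowed to be a stack rather than a scheme, but since $\pi$ is representable, flat, and projective, each of the three tools invoked (relative Serre vanishing, cohomology and base change, and the projection formula) is \'etale-local on $S$ and therefore reduces immediately to the scheme case.
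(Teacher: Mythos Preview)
Your proposal is correct and follows essentially the same route as the paper: choose a relatively ample line bundle, twist $\cV^\vee$ high enough to kill $\RR^1$ and make the adjunction counit surjective, then untwist and dualize to embed $\cV$ into $\cA = \pi^*(\pi_*(\cV^\vee(n))^\vee)(n)$. The only difference is that you spell out two points the paper leaves implicit---that the pushforward $\cE$ is locally free (so the dualization is clean) and that $\RR^1\pi_*\cA = 0$ follows from the projection formula together with $\RR^1\pi_*(\cL^n)=0$.
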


\begin{proof}
Let $\cO(1) \to \cC$ be a $\pi$-relatively ample line bundle.  Then for sufficiently large $n$, 
\[\RR^1 \pi_*(\cV^\vee(n)) = \RR^1 \pi_*(\cO(n)) = 0\] and the map 
\[ \pi^*(\pi_*(\cV^\vee(n))) \to \cV^\vee(n)\]
is surjective.  Twisting by $\cO(-n)$, we obtain the surjective map
\[ \pi^*(\pi_*(\cV^\vee(n)))(-n) \to \cV^\vee.\]
  Dualizing this map gives an embedding $\cV$ into 
\[\cA := \pi^*(\pi_*(\cV^\vee(n))^\vee)(n).\]
\end{proof} 
Given such an embedding $\cV \hookrightarrow \cA$, let $\cB$ denote the cokernel.  Define 
\begin{align*}A :=& \pi_*(\cA), \\
B :=& \pi_*(\cB).\end{align*}
The short exact sequence
\[ 0 \to \cV \to \cA \to \cB \to 0\]
induces a long exact sequence
\begin{equation}\label{e:LES1}
\begin{tikzcd}[row sep= tiny, column sep= small]
0 \ar[r] &\pi_*\cV \ar[r] & A  \ar[r]& B &  \\
\ar[r]& \RR^1\pi_*\cV   \ar[r]& \RR^1\pi_*\cA  \ar[r]& \RR^1\pi_*\cB  \ar[r]& 0.
\end{tikzcd}
\end{equation}
Note, however, that $\RR^1\pi_*\cA = 0$ by construction and so $\RR^1\pi_*\cB = 0$ as well.  Because the Euler characteristic is constant in flat families, the fibers of $A$ and $B$ have constant rank.  By Grauert's criterion \cite[12.9]{Hart}, they are vector bundles.
We conclude that the two-term complex $[A \to B]$ is a resolution of $\RR \pi_*\cV$ by vector bundles.  

Consider the total space of $A$, 
\[X:= \tot(A),\] and the map $\tau\colon X \to S$ which forgets the section of $\cA$.  Define $E := \tau^*(B)$ over $X$.  The map $A \to B$ induces a section $s \in \Gamma(X, E)$.  By \eqref{e:LES1}  the zero section $Z(s) \hookrightarrow X$ is exactly $\tot(\pi_*\cV)$.  We are now in the situation of Section~\ref{ss:zerolocus}.
%
%
We proceed as before.
\begin{definition}
Define the virtual fundamental class of the space of sections $\tot(\pi_*\cV)$ to be
\[ [\tot(\pi_*\cV)]^{\vir} := e(E, s) \in A_*(\tot(\pi_*\cV)).\]
\end{definition}
Note in the above that $[\tot(\pi_*\cV)]^{\vir}  \in A_k(\tot(\pi_*\cV))$, where 
\begin{align*} k = &\dim(\tot(A)) - \rank(E) \\
=& \dim(S) + \rank(A) - \rank(B) \\ = &\dim(S) + \chi(C_s, V_s),\end{align*}
for $s$ a geometric point of $S$.

One can check that the virtual class is well-defined in the following sense.
\begin{proposition}
The class $[\tot(\pi_*\cV)]^{\vir}$ is independent of the choice of embedding $\cV \to \cA$.
\end{proposition}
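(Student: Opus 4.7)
The plan is to prove independence by reducing to a standard fact: replacing the two-term resolution $[A \to B]$ of $R\pi_*\cV$ by one that differs by a trivially acyclic summand leaves the refined Euler class unchanged. Given two embeddings $i_j\colon \cV \hookrightarrow \cA_j$ into $\pi$-acyclic vector bundles for $j = 1, 2$, I would form the diagonal $(i_1, i_2)\colon \cV \hookrightarrow \cA_1 \oplus \cA_2$, which is again $\pi$-acyclic. By symmetry, it then suffices to compare the virtual class built from $\cA_1 \oplus \cA_2$ with the one built from $\cA_1$.

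The projection $\cA_1 \oplus \cA_2 \to \cA_1$ is compatible with the embeddings of $\cV$; pushing forward by $\pi_*$ and applying the snake lemma produces a termwise-surjective morphism of two-term complexes of vector bundles on $S$,
\[
[A_1 \oplus A_2 \to B_{12}] \longrightarrow [A_1 \to B_1],
\]
whose termwise kernel is the contractible complex $[A_2 \xrightarrow{\on{id}} A_2]$. Geometrically, $X_{12} = X_1 \times_S \tot(A_2)$ with projection $p\colon X_{12} \to X_1$, and the bundle $E_{12}$ on $X_{12}$ fits into a short exact sequence
\[
0 \to \tau_{12}^* A_2 \to E_{12} \to p^* E_1 \to 0.
\]
The section $s_{12}$ is compatible with this filtration: it maps to $p^* s_1$ in the quotient, while a direct calculation shows that over a point $(i_1(v), a_2) \in Z(p^* s_1) \cong \tot(\pi_* \cV) \times_S \tot(A_2)$ the induced $\tau_{12}^* A_2$-component of $s_{12}$ equals $a_2 - i_2(v)$. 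The zero locus of this component is the graph $\Gamma_{i_2} \colon \tot(\pi_* \cV) \hookrightarrow \tot(\pi_* \cV) \times_S \tot(A_2)$, a regular embedding recovering $\tot(\pi_* \cV) = Z(s_{12})$.

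With this setup in hand, I would compute $e(E_{12}, s_{12})$ as two iterated Gysin pullbacks: first pulling back $e(E_1, s_1)$ by the vector bundle projection $p$, then restricting along the regular section $\Gamma_{i_2}$. Since $\Gamma_{i_2}$ is a section of the vector bundle $p\colon Z(p^* s_1) \to Z(s_1)$, we have $\Gamma_{i_2}^* \circ p^* = \on{id}$ on Chow groups, yielding $e(E_{12}, s_{12}) = e(E_1, s_1)$ in $A_*(\tot(\pi_* \cV))$.

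The main obstacle is justifying the iterated Gysin decomposition when the short exact sequence $0 \to \tau_{12}^* A_2 \to E_{12} \to p^* E_1 \to 0$ need not globally split, so that $s_{12}$ cannot literally be written as a pair of independent sections. The argument must be checked at the level of normal cones (Definition~\ref{conelim}), verifying that $C_{\tot(\pi_* \cV)} X_{12}$ maps onto $p^* C_{\tot(\pi_* \cV)} X_1$ with $\tau_{12}^* A_2$-affine fibers, compatibly with the zero sections used in Definition~\ref{d:eref}. Once this compatibility is established, the standard projection formula for flat pullback through the vector bundle $p$ completes the argument.
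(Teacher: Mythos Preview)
Your plan is correct and follows the same overall architecture as the paper's sketch: reduce, via a common over-bundle, to comparing one embedding with a strictly larger one that dominates it. For the paper this is a common $\widetilde{\cA}$ containing both $\cA$ and $\cA'$; for you it is $\cA_1\oplus\cA_2$ with the diagonal embedding of $\cV$. Where the two arguments diverge is in the comparison step. The paper's second step invokes a deformation argument (citing \cite[Proposition~18.1]{Fu}) to reduce to the \emph{split} situation $\cA' = \cA\oplus\cA''$ with $\cA\hookrightarrow\cA'$ equal to $(\op{id}_\cA,0)$; once there, $X' = X\times_S\tot(A'')$, $E' = E\oplus\tau^*A''$, $s' = (s,\taut)$, and the equality $e(E',s') = e(E,s)$ is immediate from the definition. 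You instead keep the diagonal embedding, which forces the extension $0\to\tau_{12}^*A_2\to E_{12}\to p^*E_1\to 0$ to be non-split in general, and propose to decompose $e(E_{12},s_{12})$ as an iterated Gysin pullback along this filtration. The ``main obstacle'' you flag---verifying the compatibility of normal cones across a non-split filtration---is exactly what the paper's deformation step is designed to bypass. So the paper's route is shorter once one accepts the deformation input from Fulton, while yours is more self-contained but demands the normal-cone check you outline (which does go through, essentially by functoriality of refined Gysin maps for a composite of regular embeddings, as in \cite[Theorem~6.5]{Fu}).
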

\begin{proof}
This follows from more general constructions such as in \cite{BF}.  However in this simple case one can prove it more easily.  The sketch of the proof is the following:
\begin{enumerate}
\item Reduce to the case of two embeddings $\cV \to \cA$ and $\cV \to \cA '$, where the second map factors through an embedding $\cA \to \cA '$.  This can be accomplished by embedding $\cA$ and $\cA '$ in a common larger $\pi$-acyclic vector bundle $\widetilde{\cA}$.
\item By a deformation argument, reduce to the case that $\cA ' = \cA \oplus \cA ''$ and the embedding $\cA \to \cA '$ is given by $(\op{id}_{\cA}, 0)$ (see \cite[Proposition~18.1]{Fu} for such an argument).
\item In this simple case, one observes from the definition that $e(E', s')$ and $e(E, s)$ coincide.
\end{enumerate}
\end{proof}

\subsection{Stable maps as a space of sections}\label{ss:smssos}
In this section we realize $\sMbar_{g, n}(\PP^r, d)$ as (an open subset of) a space of sections and use the results of Section~\ref{d:sections} to construct the virtual fundamental class.  We conclude by showing that this construction agrees with the definition of Behrend and Fantechi in terms of the intrinsic normal cone \cite{BF, Beh}.

Recall that the moduli space $\sMbar_{g, n}(\PP^r, d)$ is the stack representing families of genus $g$ curves with $n$ marked points, together with a degree $d$ map from the curve to $\cC$.  More precisely, a family of (genus $g$, $n$-marked, degree $d$) stable maps over $S$ is given by a diagram:
\[
\begin{tikzcd}
\cC \ar[d, "\pi"] \ar[r, "f"] & \PP^r \\
S \ar[u, bend left, "\{\sigma_i\}_{i=1}^n"] &
\end{tikzcd}
\]
where:
\begin{enumerate}
\item $\pi$ is a flat family of prestable curves of genus $g$.  Included in this is the condition that the sections $\{\sigma_i\}_{1 \leq i \leq n}$ are disjoint from each other and from the nodes of $C_s$;
\item $f$ restricts on each fiber to a degree $d$ map $f_s\colon C_s \to \PP^r$;
\item\label{mC3} the line bundle $\omega_{\pi, \log} \otimes f^*(\cO_{\PP^r}(3))$ is ample, where 
$\omega_{\pi, \log}$ is defined as
\[\omega_{\pi, \log} := \omega_{\pi} \otimes \cO_\cC \left(\sum_{i=1}^n \sigma_i(S) \right).\]

\end{enumerate} 
 There is a natural notion of equivalence of such families.  See \cite{FP} for a construction of $\sMbar_{g, n}(\PP^r, d)$, and a proof that the coarse moduli space is projective.
We argue below that $\sMbar_{g, n}(\PP^r, d)$ can be realized as (a substack of) a space of sections, this time over a smooth Artin stack.  
\begin{definition}\label{d:seco}
Define the contravariant functor $\op{sec}_{\cL^{\oplus r+1}}^\circ\colon \op{Schemes} \to \op{Set}$ which sends a scheme $S$ to the set of 
equivalence classes of families over
 $S$ of the form:
\[
\begin{tikzcd}
 &\cL^{\oplus r+1} \ar[ld] \\
\cC \ar[d, "\pi"] \ar[ur, bend left, "s"] &  \\
S \ar[u, bend left, "\sigma_i"] &
\end{tikzcd}
\]
where:
\begin{enumerate}
\item $\pi$ is a flat family of prestable curves of genus $g$.  The sections $\{\sigma_i\}_{1 \leq i \leq n}$ are disjoint from each other and from the nodes of $C_s$;
\item $\cL$ is a line bundle on $\cC$, of degree  $d$ on each fiber $C_s$;
\item The section $s \in \Gamma(\cC, \cL^{\oplus r+1})$ is nowhere vanishing;
\item \label{iC4} The line bundle $\omega_{\pi, \log} \otimes \cL^{\otimes 3}$ is ample.
\end{enumerate}
A morphism between two such families $(\cC, \cL, s)$ and $(\cC ', \cL ', s') $ over $S$ is given by a morphism of pre-stable curves $\phi\colon \cC \to \cC '$ together with an isomorphism $\cL \to \phi^*(\cL ')$ which identifies $s$ with $\phi^*(s ')$.
\end{definition}
\begin{lemma}\label{l:repr}
$\sMbar_{g, n}(\PP^r, d)$ represents the functor $\op{sec}_{\cL^{\oplus r+1}}^\circ$.
\end{lemma}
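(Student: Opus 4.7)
The plan is to construct mutually inverse natural transformations between the moduli functor represented by $\sMbar_{g,n}(\PP^r, d)$ and the functor $\op{sec}_{\cL^{\oplus r+1}}^\circ$. The underlying principle is the classical fact that a morphism to $\PP^r$ is equivalent to the data of a line bundle together with $r+1$ globally generating sections, modulo the scaling action; here we are merely relativizing this in families of pre-stable curves.

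First I would define the forward map. Given a family of stable maps $(\pi\colon \cC \to S, \{\sigma_i\}, f\colon \cC \to \PP^r)$, set $\cL := f^*\cO_{\PP^r}(1)$ and let $s = (s_0, \ldots, s_r) := f^*(x_0, \ldots, x_r)$ be the pullback of the homogeneous coordinate sections. Then I would check the four conditions of Definition~\ref{d:seco}: the prestability and sections $\sigma_i$ transfer unchanged; $\cL$ has fiberwise degree $d$ because $f$ has degree $d$; $s$ is nowhere vanishing because the image of $f$ lies in $\PP^r$, whose definition excludes the simultaneous vanishing locus; and the ampleness condition follows from condition~\eqref{mC3} of the stable map definition via the identification $\cL^{\otimes 3} \cong f^*\cO_{\PP^r}(3)$.

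Next I would construct the inverse. Given $(\pi, \{\sigma_i\}, \cL, s)$ as in Definition~\ref{d:seco}, the nowhere-vanishing sections $s_0, \ldots, s_r$ define a surjection $\cO_\cC^{\oplus r+1} \twoheadrightarrow \cL$, and by the universal property of $\PP^r$ this produces a map $f\colon \cC \to \PP^r$ with $f^*\cO_{\PP^r}(1) \cong \cL$ carrying $x_i$ to $s_i$. The ampleness and degree conditions transfer back, so we obtain a genuine stable map. These two constructions are manifestly inverse to each other on objects.

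The subtle point, and the main obstacle, is matching the notions of morphism. A morphism of stable maps over $S$ is just a map $\phi\colon \cC \to \cC'$ of pre-stable curves with $f'\circ \phi = f$, whereas a morphism in $\op{sec}_{\cL^{\oplus r+1}}^\circ$ carries the extra datum of an isomorphism $\psi\colon \cL \to \phi^*\cL'$ satisfying $\psi(s_i) = \phi^*s'_i$. I would resolve this by showing that the nowhere-vanishing hypothesis on $s$ forces $\psi$ to be uniquely determined: on the open set where $s_i \neq 0$, the requirement $\psi(s_i) = \phi^*s'_i$ fixes $\psi$ as the unique isomorphism of local trivializations, and these local descriptions glue on overlaps since any two $s_i, s_j$ impose compatible constraints wherever both are nonzero. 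Conversely, from $f'\circ \phi = f$ one obtains a canonical identification $\cL = f^*\cO_{\PP^r}(1) = \phi^* (f')^*\cO_{\PP^r}(1) = \phi^*\cL'$ which visibly sends $s_i$ to $\phi^*s'_i$. Hence morphisms correspond bijectively, and the two functors are isomorphic, so $\sMbar_{g,n}(\PP^r, d)$ represents $\op{sec}_{\cL^{\oplus r+1}}^\circ$.
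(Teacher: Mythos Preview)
Your proposal is correct and follows essentially the same approach as the paper: both rest on the classical functor-of-points description of $\PP^r$ as the moduli space of line bundles with $r+1$ nowhere-vanishing sections up to scaling, and then observe that the ampleness conditions \eqref{mC3} and \eqref{iC4} correspond under this identification. The paper's argument is terser, simply invoking this description and leaving the equivalence of morphisms implicit in the phrase ``equivalence classes''; your treatment of why the datum $\psi$ is uniquely determined by the nowhere-vanishing hypothesis is a useful elaboration of a point the paper does not spell out.
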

This is well-known and follows from the correspondence between maps $T \to \PP^r$ and $r+1$-tuples of sections of a line bundle $L \to T$.  See \cite[Section~2.2]{CFK} for further details and for a generalization to toric varieties.
\begin{proof}[Proof of Lemma~\ref{l:repr}]

Recall that projective space $\PP^r$ is a fine moduli space for the functor which sends a scheme $S$ to the set of equivalence classes of the following data $(L \to S, s \in \Gamma(S, L^{\oplus r+1}))$, where $L \to S$ is a line bundle and $s$ is nowhere vanishing.  Here the collections $(L \to S, s \in \Gamma(S, L^{\oplus r+1}))$ and $(L' \to S, s' \in \Gamma(S, { L '}^{\oplus r+1}))$ are equivalent if there is an isomorphism $L \to L'$ identifying $s$ with $s'$.  

Given a family of curves $\cC \to S$, a family of degree $d$ maps $f\colon \cC \to \PP^r$ corresponds to (an equivalence class of) a line bundle $\cL \to \cC$ and a section $s \in \Gamma(\cC, \cL^{\oplus r+1})$.  Under this equivalence, $\cL$ is identified with $f^*(\cO_{\PP^r}(-1))$.  Thus condition \eqref{iC4} for the functor $\op{sec}_{\cL^{\oplus r+1}}^\circ$ corresponds to condition \eqref{mC3} for the functor of stable maps.
\end{proof}

Forgetting the section $s$, we obtain a morphism from $\sMbar_{g, n}(\PP^r, d)$ to another stack, defined below.
\begin{definition} Define a family of (genus $g$, $n$-marked) prestable curves with a degree $d$ line bundle over $S$
to be given by the following data:
\[
\begin{tikzcd}
\cL \ar[d] \\
\cC \ar[d, "\pi"]   \\
S \ar[u, bend left, "\sigma_i"] 
\end{tikzcd}
\]
where $\cL$ is degree $d$ on each fiber and condition (1) is satisfied.
Define $\Bun_{g, n, d}$, to be the stack representing such families.  

Define $\Bun_{g,n,d}^\circ \subset \Bun_{g,n,d}$ to be the open substack consisting of families such that Condition~\ref{iC4} above is also satisfied.
\end{definition}
  Note that all families in $\Bun_{g, n, d}^\circ$ have, in addition to automorphisms of the underlying curve $\cC$, a  $\CC^*$-family of automorphisms obtained by the scaling automorphisms of $\cL$.
\begin{proposition}\cite[Proposition 2.1.1]{CKM}
The stack $\Bun_{g, n, d}$ is a smooth Artin stack of dimension $4g - 4 + n$.  Over the open locus $\Bun_{g, n, d}^\circ$, the universal curve
 $$\pi\colon \fC \to \Bun_{g, n, d}^\circ$$  is projective.
 \end{proposition}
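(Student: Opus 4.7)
The plan is to factor $\Bun_{g,n,d}$ through the stack of pre-stable curves and then fiber-wise analyze the line bundle data via the relative Picard stack. Specifically, let $\Mfrak_{g,n}$ denote the stack of $n$-pointed pre-stable curves of genus $g$ (with the disjointness and smoothness conditions from the definition imposed on the markings). It is a classical fact, going back to Deligne--Mumford and made precise in the Artin-stack setting in, e.g., Behrend's work, that $\Mfrak_{g,n}$ is a smooth Artin stack of dimension $3g-3+n$: this follows from deformation theory since nodal curves are local complete intersections in surfaces, so obstructions vanish, and the dimension count is the familiar one.

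Next, I would construct the forgetful morphism $\Bun_{g,n,d} \to \Mfrak_{g,n}$ by sending $(\cC, \{\sigma_i\}, \cL)$ to $(\cC, \{\sigma_i\})$. The fiber over a geometric point $(C, \{p_i\})$ is the Picard stack $\mathscr{P}\!ic^d(C)$, which is a $\mathbb{G}_m$-gerbe over the Picard variety $\operatorname{Pic}^d(C)$ (of dimension $g$). Since on any nodal curve $C$ one has $H^2(C, \cO_C) = 0$, the relative Picard stack is smooth, and its dimension as a stack is $g - \dim \mathbb{G}_m = g - 1$. One should also justify that this relative Picard stack is representable (by an Artin stack) over $\Mfrak_{g,n}$, which amounts to the standard representability of the relative Picard functor for flat proper families together with passage to the quotient by the scaling automorphisms of $\cL$. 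Composing smooth morphisms gives smoothness of $\Bun_{g,n,d}$, and summing relative dimensions yields
\[
\dim \Bun_{g,n,d} = (3g-3+n) + (g-1) = 4g-4+n.
\]

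For the second assertion, recall that the universal curve $\fC \to \Bun_{g,n,d}$ is flat and proper (since this holds for the universal curve over $\Mfrak_{g,n}$ and the Picard morphism changes nothing on the curve side). On the open substack $\Bun_{g,n,d}^\circ$, Condition \eqref{iC4} in Definition~\ref{d:seco} asserts exactly that the tautological line bundle $\omega_{\pi,\log} \otimes \cL^{\otimes 3}$ on $\fC$ is relatively ample over $\Bun_{g,n,d}^\circ$. Thus $\pi\colon \fC \to \Bun_{g,n,d}^\circ$ is a proper morphism equipped with a relatively ample line bundle, and hence is projective in the sense of admitting a closed immersion into a projective bundle over the base.

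The main obstacle here is the careful verification that the relative Picard stack is a smooth Artin stack over $\Mfrak_{g,n}$ with the stated relative dimension, especially at points corresponding to nodal curves; once this is in hand, everything else is either formal (dimension addition, composition of smooth morphisms) or an immediate consequence of the ampleness hypothesis defining the open substack $\Bun_{g,n,d}^\circ$. This is the content of the cited \cite[Proposition 2.1.1]{CKM}, whose proof one would simply invoke or adapt.
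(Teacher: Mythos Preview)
The paper does not actually prove this proposition; it simply states it with a citation to \cite[Proposition~2.1.1]{CKM} and adds only the one-line remark that $\omega_{\pi,\log}\otimes\cL^{\otimes 3}$ is relatively ample by construction. There is therefore no argument in the paper to compare your proposal against.

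That said, your sketch is the standard and correct route, and it is essentially what the cited reference does: factor $\Bun_{g,n,d}\to\Mfrak_{g,n}$, identify the fibers with the relative Picard stack (a $\mathbb{G}_m$-gerbe over the relative Picard scheme), use vanishing of $H^2(C,\cO_C)$ for curves to get smoothness, and add dimensions. Your treatment of projectivity is exactly the observation the paper records after the proposition. The only point you flag as delicate---smoothness and algebraicity of the relative Picard stack over the nodal locus---is indeed where the work lies, and you are right that one simply invokes the cited result (or Artin's criteria applied to the Picard functor) rather than reproving it here.
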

Note that 
the line bundle $\omega_{\pi, \log} \otimes \cL^{\otimes 3}$
 is relatively ample by construction (see Condition~\ref{iC4} above).  

Consider the universal curve $\pi\colon \fC \to \Bun_{g, n, d}^\circ$, and the universal line bundle $\cL \to \fC$.  Define the vector bundle $\cV := \cL^{\oplus r+1}$  over $\fC$.  

\begin{proposition} \label{p:openinsecs}
There is an open immersion 
\[\sMbar_{g, n}(\PP^r, d) \hookrightarrow \tot(\pi_* \cV).\]

\end{proposition}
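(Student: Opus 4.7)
The plan is to identify the $T$-points of both sides via their moduli-theoretic interpretations and then to check that the forgetful map from stable maps to sections is an open immersion.

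First I would unpack both sides as functors on schemes. By Proposition~\ref{p:dirimcone}, a $T$-point of $\tot(\pi_*\cV)$ is the data of a morphism $\rho\colon T \to \Bun_{g,n,d}^\circ$ together with a section $p \in H^0(\cC_T, \rho^*\cV)$. Unwinding $\rho$ via the moduli description of $\Bun_{g,n,d}^\circ$, this is equivalent to a tuple $(\pi_T\colon \cC_T \to T, \{\sigma_i\}, \cL_T, s)$ where $(\pi_T, \{\sigma_i\}, \cL_T)$ is a $T$-family in $\Bun_{g,n,d}^\circ$ (in particular $\omega_{\pi_T,\log} \otimes \cL_T^{\otimes 3}$ is $\pi_T$-relatively ample) and $s \in H^0(\cC_T, \cL_T^{\oplus r+1})$. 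By Lemma~\ref{l:repr}, a $T$-point of $\sMbar_{g,n}(\PP^r, d)$ is the same data together with the additional requirement that $s$ is nowhere vanishing. Forgetting this requirement defines a natural transformation $\iota\colon \sMbar_{g,n}(\PP^r,d) \to \tot(\pi_*\cV)$; it is injective on $T$-points and respects isomorphisms between families, so it is a monomorphism of stacks.

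Next I would show that $\iota$ is an open immersion by verifying that the condition ``$s$ is nowhere vanishing'' cuts out an open substack of $\tot(\pi_*\cV)$. Given any $T$-point of $\tot(\pi_*\cV)$ as above, the zero locus $Z(s) \subset \cC_T$ is a closed subscheme. Since the universal curve $\pi\colon \fC \to \Bun_{g,n,d}^\circ$ is projective by the cited proposition, its base change $\pi_T$ is proper, hence $\pi_T(Z(s)) \subset T$ is closed. Its open complement $U \subset T$ is precisely the locus over which $s$ restricts to a nowhere vanishing section on each geometric fiber, and the given $T$-point factors through $\sMbar_{g,n}(\PP^r, d)$ if and only if $T = U$. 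This verifies that $\iota$ is an open immersion.

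The main obstacle is the careful moduli-theoretic bookkeeping needed to identify $T$-points of $\tot(\pi_*\cV)$ with families of curves equipped with a line bundle and a section of $\cL^{\oplus r+1}$, and in particular to match the relative ampleness condition built into $\Bun_{g,n,d}^\circ$ with condition~\eqref{iC4} in Definition~\ref{d:seco}; once this identification is in hand the openness assertion is a direct consequence of the properness of the universal curve over $\Bun_{g,n,d}^\circ$.
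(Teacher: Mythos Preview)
Your proposal is correct and follows essentially the same approach as the paper: use Proposition~\ref{p:dirimcone} and Lemma~\ref{l:repr} to identify $\sMbar_{g,n}(\PP^r,d)$ inside $\tot(\pi_*\cV)$ as the locus where $s$ is nowhere vanishing, and then verify this is open by pushing the closed vanishing locus of the section forward along the proper universal curve. The only cosmetic difference is that the paper phrases the openness argument on the universal family over $\tot(\pi_*\cV)$ rather than over an arbitrary test scheme $T$, but these are equivalent.
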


\begin{proof}
By Condition~\ref{iC4} of Definition~\ref{d:seco}, $\sMbar_{g, n}(\PP^r, d)$ lies over $\Bun_{g,n,d}^\circ$.
By Proposition~\ref{p:dirimcone} and Lemma~\ref{l:repr}, $\sMbar_{g, n}(\PP^r, d)$ is a substack of $\tot(\pi_* \cV)$, defined by the additional condition that the section $s$ is nowhere vanishing.  We claim that this is an open condition.

Consider the universal curve $\fC_{ \tot(\pi_* \cV)} =  \tot(\pi_* \cV) \times_{\Bun_{g,n,d}^\circ} \fC$ over  $\tot(\pi_* \cV)$.  
On $\fC_{ \tot(\pi_* \cV)}$ there is a universal vector bundle $\cV_{ \tot(\pi_* \cV)} \to \fC_{ \tot(\pi_* \cV)}$ (pulled back from the map $\fC_{ \tot(\pi_* \cV)} \to \fC$), together with a universal section $\tilde s \in \Gamma(\fC_{ \tot(\pi_* \cV)}, \cV_{ \tot(\pi_* \cV)})$.  Consider the closed substack $Z(\tilde  s) \subset \fC_{ \tot(\pi_* \cV)}$ defined by the vanishing of the universal section.  Because the map $\pi\colon \cC \to \Bun_{g,n,d}^\circ$ is proper, so is the map $\pi_{ \tot(\pi_* \cV)} \colon \fC_{ \tot(\pi_* \cV)} \to  \tot(\pi_* \cV)$.  The image $\pi_{ \tot(\pi_* \cV)}(Z(\tilde s))$ is therefore a closed subset of $\tot(\pi_* \cV)$.  Its complement consists of points $(C, \cV|_C \to C, s \in \Gamma(C, \cV_C))$ such that $s$ is nowhere vanishing.  These are exactly the objects of 
Definition~\ref{d:seco}.  We conclude that $\sMbar_{g, n}(\PP^r, d) = \pi_{ \tot(\pi_* \cV)}(Z(\tilde s))^c$ is open in $\tot(\pi_*\cV)$.
\end{proof}

\subsection{The virtual class}\label{ss:vcp}
Finally, we construct a virtual fundamental class for $\sMbar_{g, n}(\PP^r, d)$ via a very minor modification of the procedure in Section~\ref{ss:sos}.  
Let $\cV$ be as in the previous paragraph.  Note that $\pi\colon \fC \to \Bun_{g, n, d}^\circ$ is projective.
As described in Section~\ref{ss:sos}, embed $\cV$ into a $\pi$-acyclic vector bundle $\cA \to \fC$.  Define $\cB$ as the cokernel of $\cV \to \cA$, and let 
\begin{align*}A :=& \pi_*(\cA), \\
B :=& \pi_*(\cB).\end{align*}
Then $[A \to B]$ is a two-term resolution of $\RR \pi_* \cV$ by vector bundles.  Define 
\[\tau\colon \tot(A) \to \Bun_{g, n, d}^\circ\] and $E := \tau^*(B)$ as before. 
Because  $\Bun_{g, n, d}^\circ$ is a smooth Artin stack, $\tot(A)$ will be as well.

For $b \in \Bun_{g, n, d}^\circ$,  the fiber of $\tot(A)$ over $b$ is given by 
\[\tot(A)_b = \{s \in \Gamma(C_b, \cA|_{C_b})\}.\]  Define the open substack
\[U \subset \tot(A)\]
by the condition that the section $s$ is nowhere vanishing on each fiber.  Again this is an open condition.  One can check that $U$ is a Deligne--Mumford stack.  In particular the automorphism group of each point of $U$ is finite.  As described in Section~\ref{ss:sos}, the map $A \to B$ of vector bundles induces a natural section which we denote $\beta$ in $\Gamma(U, E)$.  The zero locus of $\beta$ cuts out exactly those sections of $\cA$ which are sections of $\cV = \cL^{\oplus r+1}$.  As we have restricted to the locus of nowhere vanishing sections, we conclude that 
\[\sMbar_{g, n}(\PP^r, d) = Z(\beta) \subset U \subset \tot(A).\]
\begin{definition}\label{d:vcp}
Define the virtual fundamental class
\[[\sMbar_{g, n}(\PP^r, d)]^{\vir} := e(E, \beta) \in A_*(\sMbar_{g, n}(\PP^r, d)).\]
\end{definition}

As a quick check that this definition is reasonable, we do a dimension count.  The refined Euler class 
$e(E, \beta)$ lies in  $A_k(\sMbar_{g, n}(\PP^r, d))$, where 
\begin{align*}
k &= \dim(\Bun_{g,n,d}) + \rank(A) - \rank(B)\\
& = \dim(\Bun_{g,n,d}) + \chi(V_s) \\
& = 4g-4 +n + (r+1)(1-g+d) \\
& = (1-g)(r-3) + n + (r+1)d\\
& =: \op{virdim}\left( \sMbar_{g, n}(\PP^r, d)\right).
\end{align*}
This is the \emph{expected dimension} of $\sMbar_{g, n}(\PP^r, d)$ \cite[Section 7.1.4]{CK}.  More precisely, there is an open substack $\sMbar_{g, n}(\PP^r, d)^\circ$ of $\sMbar_{g, n}(\PP^r, d)$ parametrizing equivalence classes of maps $f\colon C \to \PP^r$ such that $C$ is smooth and $H^1(C, f^*(\cc O_{\PP^r}(1))) = 0$.  On this open locus $\sMbar_{g, n}(\PP^r, d)^\circ$ is smooth, of dimension $(1-g)(r-3) + n + (r+1)d$.

\begin{proposition}\label{p:agrees}
The virtual fundamental class of Definition~\ref{d:vcp} agrees with the Gromov--Witten theory virtual fundamental class for $\sMbar_{g, n}(\PP^r, d)$ as defined by Behrend and Fantechi in \cite{BF, Beh}.
\end{proposition}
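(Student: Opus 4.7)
The plan is to show that both constructions compute the Behrend--Fantechi virtual class associated to the canonical relative perfect obstruction theory of $\sMbar_{g,n}(\PP^r,d) \to \Bun_{g,n,d}^\circ$. Deformation theory for sections gives a canonical morphism $(R\pi_*\cV)^\vee \to \LL_{\sMbar/\Bun^\circ}$ which is a perfect obstruction theory in the sense of \cite{BF}: over a prestable curve with line bundle $(C,\cL)$, the relative tangent space is $H^0(C,\cV|_C)$ and the relative obstruction space is $H^1(C,\cV|_C)$. The first step is to verify that the resolution $[A \to B]$ of $R\pi_*\cV$ constructed in Section~\ref{ss:sos} yields a two-term presentation of this obstruction theory as $[B^\vee \to A^\vee]$ placed in degrees $-1, 0$, with the structural map to $\LL_{\sMbar/\Bun^\circ}$ induced by the universal section.

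The second step is to apply the ``basic model'' principle in the Behrend--Fantechi formalism: if a Deligne--Mumford stack $M$ embeds in a smooth ambient $U$ (over a smooth Artin base $S$) as the zero locus of a section $\beta$ of a vector bundle $F$, then the two-term complex $[F^\vee|_M \to \Omega_{U/S}|_M]$, with differential $d\beta$, furnishes a perfect relative obstruction theory, the relative intrinsic normal cone is the quotient stack $[C_M U / F|_M]$, and the associated Behrend--Fantechi virtual class reduces to $0^!_F[C_M U]$, which is by definition the refined Euler class $e(F|_M, \beta)$ (compare \cite[Chapter~6]{Fu}). In our setting one takes $S = \Bun_{g,n,d}^\circ$, $U \subset \tot(A)$ is smooth over $S$ with $\Omega_{U/S} = \tau^* A^\vee|_U$, and $F = \tau^*B$; the resulting complex becomes exactly $[B^\vee|_M \to A^\vee|_M]$, matching the first step on the nose.

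The remaining task, and the main obstacle, is to match these two presentations as the same perfect obstruction theory on $M$. Concretely, one must verify that the differential $d\beta\colon \tau^*B^\vee|_M \to \Omega_{U/\Bun^\circ}|_M$ coincides with the connecting morphism $B^\vee \to A^\vee$ dual to the inclusion $\cV \hookrightarrow \cA$, and that the induced map to $\LL_{M/\Bun^\circ}$ built from $\beta$ agrees with the canonical one coming from deformation theory of sections. Both claims are essentially formal but require a careful unwinding of the tautological constructions; the second in particular involves comparing the universal section of $\cA$ over $U$ with the natural obstruction theory of the inclusion $\sMbar \hookrightarrow \tot(\pi_*\cV)$ described in Proposition~\ref{p:openinsecs}. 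Once these identifications are in place, the equality of Definition~\ref{d:vcp} with the Behrend--Fantechi class follows from the fact that quasi-isomorphic perfect obstruction theories define the same virtual class, together with the compatibility of Behrend--Fantechi's construction with smooth base change along $\Bun_{g,n,d}^\circ$.
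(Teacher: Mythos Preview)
Your outline correctly identifies the ``basic model'' principle from \cite[Section~6]{BF}: the refined Euler class $e(E,\beta)$ equals the Behrend--Fantechi virtual class for the relative perfect obstruction theory $[\tau^*B^\vee \to \Omega_{U/S}]$. The paper invokes the same principle. However, your choice of base $S = \Bun_{g,n,d}^\circ$ and obstruction theory $(R\pi_*\cV)^\vee$ leaves a real gap.

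The Behrend--Fantechi Gromov--Witten class is, by definition, the virtual class attached to the relative perfect obstruction theory $(\RR\pi_* f^* T\PP^r)^\vee$ over $\mathfrak{M}_{g,n}$, not to $(R\pi_*\cV)^\vee$ over $\Bun_{g,n,d}^\circ$. Your argument establishes that Definition~\ref{d:vcp} computes the latter; it does not explain why this coincides with the former. Invoking ``smooth base change along $\Bun_{g,n,d}^\circ$'' is not enough: even granting that the forgetful map $\Bun_{g,n,d}^\circ \to \mathfrak{M}_{g,n}$ is smooth, one must still exhibit a compatibility of obstruction theories, and the two theories involve genuinely different bundles ($\cV = \cL^{\oplus r+1}$ versus $f^*T\PP^r$). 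The paper addresses exactly this point: it works relative to $\mathfrak{M}_{g,n}$ from the start, computes the relative tangent bundle of $\nu\colon U \to \mathfrak{M}_{g,n}$ explicitly as $A' = \pi_*(\cA')$ where $\cA'$ is the cokernel of the tautological map $\cO_\cC \to \cA$, and then uses the pulled-back Euler sequence $0 \to \cO_\cC \to \cV \to f^*T\PP^r \to 0$ to identify $[B^\vee \to (A')^\vee]$ with $(\RR\pi_* f^* T\PP^r)^\vee$. This Euler-sequence comparison is the substantive step you are missing; without it (or an equivalent compatibility argument relating the two obstruction theories across the two bases), your proof does not reach the stated conclusion.
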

\begin{proof}
Consider the morphism 
\[\phi\colon \sMbar_{g, n}(\PP^r, d) \to \mathfrak M_{g,n}\]
to the stack of pre-stable curves which forgets the map to $\PP^r$ (but does not stabilize the curve).
Following \cite{BF, Beh}, the virtual fundamental class on $\sMbar_{g, n}(\PP^r, d)$ may be defined by 
using the relative perfect obstruction theory $(\RR \pi_*f^* T\PP^r)^\vee$ (see the introduction of \cite{Beh} for details).

The key to the proposition is the following fact,
described in the beginning of Section 6 of \cite{BF}.  Whenever one has the fiber square: 
\begin{equation}\label{e:basicex}
\begin{tikzcd}
Z \ar[d, hookrightarrow] \ar[r, hookrightarrow, "i"] & X \ar[d, hookrightarrow, "s"] \\
X \ar[r, hookrightarrow, "0"] & E,
\end{tikzcd}
\end{equation}
there is a perfect obstruction theory $F^\bullet$ given by the complex
\[ s|_Z^*(E)^\vee \to i^* \Omega_X,\]
together with the natural map to the cotangent complex $L_Z^\bullet$ (see \cite[Section 6]{BF} for details).  In this special case, the virtual fundamental class $[Z, F^\bullet]$ of Behrend--Fantechi is precisely the refined Euler class $e(E,s)$ of Definition~\ref{d:refeu}.  
More generally, consider the relative version of diagram~\eqref{e:basicex} over a base stack $S$.  Assume that $X$ and $S$ are smooth Deligne--Mumford stacks. 
Then the complex \[ s|_Z^*(E)^\vee \to i^* \Omega_{X/S},\]
defines a relative perfect obstruction theory $F_S^\bullet$ for $X/S$.  In this case, again the virtual fundamental class $[Z, F_S^\bullet]$ is the refined Euler class $e(E,s)$. 

On the other hand, as described in Sections~\ref{ss:sos} and~\ref{ss:smssos}, a particular case of diagram~\eqref{e:basicex} is given by:
\begin{equation}\label{e:partdiag}
\begin{tikzcd}
\sMbar_{g, n}(\PP^r, d) \ar[d, hookrightarrow] \ar[r, hookrightarrow, "i"] & U \ar[d, hookrightarrow, "\beta"] \\
U \ar[r, hookrightarrow, "0"] & E.
\end{tikzcd}
\end{equation}
Via the morphism $\nu\colon U \to  \mathfrak M_{g,n}$ which forgets the section of $\cc A $ and the line bundle $\cL$,  diagram \eqref{e:partdiag} is relative to  
$\mathfrak M_{g,n}$.

By the above discussion, the class in Definition~\ref{d:vcp} is equal to the class 
\[[\sMbar_{g, n}(\PP^r, d), F^\bullet_{\mathfrak M_{g,n}}],\] for the relative perfect obstruction theory
\begin{align}  F^\bullet_{\mathfrak M_{g,n}} &=  \tau^* B^\vee \to \Omega_{U/\mathfrak M_{g,n}}.\label{e:POT}
\end{align}
The proposition will follow after identifying the relative perfect obstruction theory $F^\bullet_{\mathfrak M_{g,n}}$ with $(\RR \pi_*f^* T\PP^r)^\vee$.

Consider the following commutative diagram on the universal curve $\cc C$ over $\sMbar_{g, n}(\PP^r, d)$, viewed here as the vanishing locus $Z(\beta) \subset U$:
\begin{equation}\label{e:9}
\begin{tikzcd} 
\cO_{\cC} \ar[r] \ar[d] & \cc V \ar[r] \ar[d] & f^*(T \PP^{r-1}) \ar[d] \\
\cO_{\cC} \ar[r] \ar[d] & \cc A \ar[r] \ar[d] & \cc A ' \ar[d] \\
0 \ar[r] &\cc B  \ar[r] &\cc B.
\end{tikzcd}
\end{equation}
Here $f\colon \cC \to \PP^{r-1}$ is the universal map to $\PP^{r-1}$ and the top row is obtained by pulling back the Euler sequence,
\[0 \to \cO_{\PP^{r-1}} \to \cO_{\PP^{r-1}}(1)^{\oplus r} \to T\PP^{r-1} \to 0,\]
from $\PP^{r-1}$.  
The vector bundles $\cc V$, $\cc A$, and $\cc B$ are all pulled back from the corresponding vector bundles on the universal curve over $\Bun_{g,n,d}^\circ$.
By abuse of notation we will denote them by the same letters.  The section $\cc O_{\cC} \to \cc A$ is defined by the tautological section and the vector bundle $\cc A '$ is the cokernel of this map.  
Note that all rows and columns are short exact sequences.

For a given point $\{C, \cc L|_C \to C, s\in \Gamma(C, \cc A|_C)\} \in U$, the relative tangent bundle $T\nu$ consists of deformations of the line bundle $\cc L|_C$, together with deformations of the section $s\in \Gamma(C, \cc A|_C)$.  The former space is given by $R^1\pi_*(C, \cc O_C)$ and the later is $H^0(C, \cc A|_C) / \CC$, where the quotient by $\CC$ accounts for the fact that two sections give equivalent points of $U$ if they agree up to a scaling.  

The bundle $T\nu$ may be described globally by pushing forward the middle row of the above diagram:
\[\cc O_U \to \tau^*A \to A' \to R^1 \pi_*(\cc O_{\cC}),\]
where $A' := \pi_*(\cc A')$.  This yields the short exact sequence
\[0 \to\op{cok}(\cc O_U \to \tau^*A ) \to A' \to R^1 \pi_*(\cc O_{\cC}) \to 0\]
Deformations of the section $s$ up to scaling are described by exactly $\op{cok}(\cc O_U \to \tau^*A )$ and we observe that $T_\nu = A'$.

Returning to \eqref{e:POT}, $F^\bullet_{\mathfrak M_{g,n}}$ may now be represented as $B^\vee \to {A '}^\vee$, where the map is the dual of the pushforward of $\cc A ' \to \cc B$.  Because the last column of \eqref{e:9} is exact, $F^\bullet_{\mathfrak M_{g,n}}$ is equal to $(\RR \pi_*f^* T\PP^r)^\vee$, and $[\sMbar_{g, n}(\PP^r, d), F^\bullet_{\mathfrak M_{g,n}}]$ is therefore the virtual class of the first paragraph of the proof.


\end{proof}


\section{Stable maps to a hypersurface}\label{s:ne0}
Let $w = w(x_0, \ldots, x_r)$ be a homogeneous polynomial of degree $k$.  We assume that $w$ defines an isolated singularity at the origin in $\CC^{r+1}$, or equivalently, that
the hypersurface $X_w = Z(w) \subset \PP^r$ is smooth.
In this section we expand the ideas of the previous section to construct a virtual class on $\sMbar_{g, n}(X_w, d)$.  

Related to the moduli space of stable maps to $X_w$ is the space of maps to $\PP^r$ with \emph{$p$-fields}, which has the advantage of being a space of sections of a vector bundle as in Section~\ref{ss:sos}.  This moduli space and various generalizations have appeared in \cite{CL, Clader, FJR15}.
\begin{definition}\cite{CL}
Given $k \in \ZZ_{>0}$, define the space of stable maps to $\PP^r$ with a $p$-field of degree $k$ to be the moduli space lying over $\sMbar_{g, n}(\PP^r, d)$ parametrizing families of stable maps $f\colon C \to \PP^r$ of degree $d$ from genus $g$ curves $C$, together with a section
\begin{equation}\label{e:potm} p \in \Gamma(C, f^*(\cO_{\PP^r}(-k) \otimes \omega_{C})).\end{equation}
This moduli stack is denoted by $\sMbar_{g, n}(\PP^r, d)^p$.
\end{definition}
The stack $\sMbar_{g, n}(\PP^r, d)^p$ is easily seen to be a Deligne--Mumford stack because $\sMbar_{g, n}(\PP^r, d)$ is.  We can also view the stack as representing families \[(\pi\colon \cC \to S, \cL \to \cC, (s, p) \in \Gamma(\cC, \cL^{\oplus r+1} \oplus \cL^{\otimes -k}\otimes \omega_{\pi}))\] 
where:
\begin{enumerate}
\item $\pi$ is a flat family of prestable curves of genus $g$;
\item $\cL$ is a line bundle on $\cC$, of degree  $d$ on each fiber $C_s$; 
\item The section $s \in \Gamma(\cC, \cL^{\oplus r+1})$ is nowhere vanishing;
\item The line bundle $\omega_{\pi, \log} \otimes \cL^{\otimes 3}$ is ample.
\end{enumerate}
By the same reasoning as Proposition~\ref{p:openinsecs},
 we see that $\sMbar_{g, n}(\PP^r, d)^p$ is (an open subset of) a space of sections over $\Bun_{g, n, d}^\circ$.

\subsection{Smooth embedding via resolutions}\label{ss:sevr}
We can repeat the argument of the previous section to realize $\sMbar_{g, n}(\PP^r, d)^p$ as the zero locus of a section of a vector bundle defined over a smooth Deligne--Mumford stack.  
Let
\[
\begin{tikzcd}
\cL \ar[d] \\
\cC \ar[d, "\pi"]   \\
S 
\end{tikzcd}
\]
be a family of prestable curves together with a degree $d$ line bundle over $\cC$, pulled back from $\Bun_{g, n, d}^\circ$ via a map $S \to \Bun_{g, n, d}^\circ$.
Define the fiber products 
\begin{align}
\sMbar_{g, n}(\PP^r, d)_S := &S \times_{\Bun_{g, n, d}^\circ} \sMbar_{g, n}(\PP^r, d)\\
\sMbar_{g, n}(\PP^r, d)^p_S := &S \times_{\Bun_{g, n, d}^\circ} \sMbar_{g, n}(\PP^r, d)^p \nonumber \\
\sMbar_{g, n}(X_w, d)_S := &S \times_{\Bun_{g, n, d}^\circ} \sMbar_{g, n}(X_w, d). \nonumber
\end{align}
Let
\begin{align*}\cV_1 := &\cL^{\oplus r+1}, \\
\cV_2 := &\cL^{\otimes -k}\otimes \omega_\pi,\\
\cV := &\cV_1 \oplus \cV_2.\end{align*}
Choose $\pi$-acyclic vector bundles $\cA_1$ and $\cA_2$ together with embeddings of $\cV_1$ and $\cV_2$ to obtain short exact sequences
\begin{align*} 0 \to &\cV_1 \to \cA_1 \to \cB_1 \to 0 \\
0 \to &\cV_2 \to \cA_2 \to \cB_2 \to 0
\end{align*}
where $\cB_1$ and $\cB_2$ are the respective cokernels.  Define
\begin{align*} A:= A_1 \oplus A_2 := &\pi_*(\cA_1) \oplus \pi_*(\cA_2) \\
 B:= B_1 \oplus B_2 := &\pi_*(\cB_1) \oplus \pi_*(\cB_2).\end{align*}
Then the two-term complex of vector bundles $A \to B$ gives a resolution of $\RR \pi_*\cV$ over $S$. 
\begin{definition}\label{d:u}
Define 
$U$ to be the open substack of $\tot(A) = \tot(A_1 \oplus A_2)$,
 where the section $s \in \Gamma(\cC, \cA_1)$ is nowhere vanishing.  Let $\tau\colon U \to \Bun_{g, n, d}^\circ$ be the map which forgets the section of $\cA$, and define 
\[E := \tau^*(B).\]  
\end{definition}The map $A \to B$ induces a section $\beta \in \Gamma(U, E)$.  We conclude that
\[\sMbar_{g, n}(\PP^r, d)^p_S = Z(\beta) \subset U\]
 as desired.  
 
 \subsection{Incorporating $w$}\label{ss:iw}
 
 Recall that $w(x_0, \ldots, x_r)$ is a homogeneous polynomial of degree $k$ defining a hypersurface $X_w \subset \PP^r$. Define 
 \[\hat w = \hat w(x_0, \ldots, x_r, y) := y \cdot w(x_0, \ldots, x_r).\]
 

\begin{proposition}\label{palphap}
The function $\hat w$ induces a morphism \[\tilde \alpha\colon \Sym^{k+1}([A \to B]) \to \cO_S[-1]\] in the derived category of $S$.
\end{proposition}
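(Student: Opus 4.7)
The polynomial $\hat w(x_0,\ldots,x_r,y)=y\cdot w(x_0,\ldots,x_r)$ is homogeneous of degree $k+1$. Since $\cV=\cV_1\oplus\cV_2=\cL^{\oplus r+1}\oplus(\cL^{\otimes -k}\otimes\omega_\pi)$, substituting a section $(x_0,\ldots,x_r,y)$ of $\cV$ into $\hat w$ yields a section of $\cL^{\otimes k}\otimes\cL^{\otimes -k}\otimes\omega_\pi=\omega_\pi$. The coefficients of $\hat w$ therefore assemble into a natural morphism of locally free sheaves on $\cC$,
\[
\mu\colon \Sym^{k+1}\cV\longrightarrow\omega_\pi,
\]
supported on the summand $\Sym^k\cV_1\otimes\cV_2\subset\Sym^{k+1}\cV$.

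The plan is to translate $\mu$ into a morphism on $S$ by resolving $\cV$ with the two-term complex $[\cA\to\cB]$, taking the derived symmetric power, pushing forward, and contracting with the trace of the relative dualizing complex. Since $[\cA\to\cB]$ (with $\cA$ in cohomological degree $0$ and $\cB$ in degree $1$) is a locally free resolution of the locally free sheaf $\cV$, and since we work in characteristic zero, the derived symmetric power $L\Sym^{k+1}[\cA\to\cB]$ is represented by the naive graded symmetric power: the complex on $\cC$ whose term in cohomological degree $j$ is $\Sym^{k+1-j}\cA\otimes\wedge^j\cB$ (for $j=0,\ldots,k+1$), with differentials induced from $\cA\to\cB$ by the Leibniz rule, and it is quasi-isomorphic to $\Sym^{k+1}\cV$. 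Composing the resulting quasi-isomorphism with $\mu$ yields a morphism $\Sym^{k+1}[\cA\to\cB]\to\omega_\pi$ in $D(\cC)$, and applying $R\pi_*$ followed by the trace $R\pi_*\omega_\pi\to\cO_S[-1]$ (coming from $\omega_\pi[1]$ being the relative dualizing complex) gives a morphism
\[
R\pi_*\Sym^{k+1}[\cA\to\cB]\longrightarrow\cO_S[-1]
\]
in $D(S)$.

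The remaining step, identification of the source with $\Sym^{k+1}[A\to B]$, is the main obstacle, since $R\pi_*$ does not commute with $\Sym^{k+1}$ in general. To handle this I would strengthen the acyclicity of the embedding $\cV\hookrightarrow\cA$ constructed in Section~\ref{ss:sos}: there $\cA=\pi^*M(n)$ with $M=\pi_*(\cV^\vee(n))^\vee$ and $n\gg 0$, so $\Sym^i\cA=\pi^*(\Sym^iM)(in)$; by taking $n$ even larger, Serre vanishing applied to the finitely many coherent sheaves $\Sym^i\cA\otimes\wedge^j\cB$ with $i+j\le k+1$ ensures each of them is $\pi$-acyclic. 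Under this choice, $R\pi_*$ acts term by term on the symmetric-power complex, and the result equals $\Sym^{k+1}[A\to B]$ in $D(S)$, giving the desired $\tilde\alpha$. Independence of the construction on the choice of $\cA$ follows by the same kind of argument as for the well-definedness of $[\tot(\pi_*\cV)]^{\vir}$ in Section~\ref{ss:sos}: reduce to two embeddings with one factoring through the other, then by deformation to the direct-sum case.
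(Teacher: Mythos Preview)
Your construction of the map $\mu\colon \Sym^{k+1}\cV \to \omega_\pi$ on $\cC$ and the subsequent composition with the trace is correct and matches the paper's Lemma~\ref{l:wmap}.  The gap is in your final paragraph, where you claim that after choosing $\cA$ sufficiently acyclic, $R\pi_*$ applied term by term to the complex $\Sym^{k+1}[\cA\to\cB]$ yields $\Sym^{k+1}[A\to B]$.  Acyclicity of each $\Sym^i\cA\otimes\wedge^j\cB$ only guarantees that $R\pi_*(\Sym^i\cA\otimes\wedge^j\cB)=\pi_*(\Sym^i\cA\otimes\wedge^j\cB)$; it does \emph{not} make this equal to $\Sym^i(\pi_*\cA)\otimes\wedge^j(\pi_*\cB)$.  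Already for $i=2$, $j=0$ the fiber of $\pi_*(\cA\otimes\cA)$ at $s$ is $H^0(C_s,\cA_s\otimes\cA_s)$, while the fiber of $(\pi_*\cA)\otimes(\pi_*\cA)$ is $H^0(C_s,\cA_s)\otimes H^0(C_s,\cA_s)$; the multiplication map from the latter to the former is not an isomorphism in general.  So the identification you assert fails, and with it the detour through the upstairs resolution.

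The paper's proof avoids this entirely: rather than seeking an isomorphism, it uses the \emph{natural map}
\[
\op{nat}\colon \Sym^{k+1}(\RR\pi_*\cV)\longrightarrow \RR\pi_*(\Sym^{k+1}\cV),
\]
obtained by adjunction from $\LL\pi^*\Sym^{k+1}(\RR\pi_*\cV)\to\Sym^{k+1}(\LL\pi^*\RR\pi_*\cV)\to\Sym^{k+1}\cV$.  This map need not be an isomorphism, but a morphism is all that is required to define $\tilde\alpha$.  Since $[A\to B]\simeq\RR\pi_*\cV$ in $D(S)$, one composes $\Sym^{k+1}[A\to B]\xrightarrow{\sim}\Sym^{k+1}(\RR\pi_*\cV)\xrightarrow{\op{nat}}\RR\pi_*(\Sym^{k+1}\cV)\xrightarrow{\widetilde{\hat w}}\RR\pi_*\omega_\pi\to\cO_S[-1]$.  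Your argument can be repaired by replacing the claimed equality with this natural map---at which point the passage through $[\cA\to\cB]$ on $\cC$ becomes unnecessary, and the proof collapses to the paper's.
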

\begin{proof}
The morphism $\tilde \alpha$ will be defined as a composition of morphisms in the derived category:
\begin{equation}\label{e:map}
\begin{tikzcd}[sep=small]
\Sym^{k+1}([ A \to B]) \ar[r, "\sim"] \ar[dddrrr, dashed, "\tilde \alpha"] & \Sym^{k+1}(\RR \pi_* \cV) \ar[r, "\op{nat}"] &\RR \pi_* (\Sym^{k+1}(\cV)) \ar[r, "\widetilde{\hat w}"] & \RR \pi_*(\omega_\pi) \ar[d] \\
& & & \RR^1 \pi_*( \omega_\pi)[-1] \ar[d] \\
& & & \RR^0 \pi_*(\cO_S)^\vee[-1] \ar[d] \\
 & & & \cO_S [-1].
\end{tikzcd}
\end{equation}
In the above, the first vertical map is obtained by taking cohomology in degree one, the second vertical map is Serre duality, and the third is the trace map.   The first horizontal map is due to the fact that $[ A \to B]$ is equivalent to $\RR \pi_* \cV$.  The second horizontal map
\begin{equation}\label{e:nat}\op{nat}\colon \Sym^{k+1}(\RR \pi_* \cV) \to \RR \pi_* (\Sym^{k+1}(\cV))\end{equation}
 is induced from the composition
\[\mathbb{L}\pi^* \Sym^{k+1}(\RR \pi_* \cV)\to \Sym^{k+1}(\mathbb{L}\pi^* \RR \pi_* \cV) \to \Sym^{k+1}(\cV)\]
via adjunction.  

To define $\tilde \alpha$ it therefore remains to define $\widetilde{\hat w}$.  This is done in Lemma~\ref{l:wmap} below.

\end{proof}

\begin{lemma}\label{l:wmap}
 The function $\hat w$ induces a morphism
 \[ \widetilde{\hat w}\colon \RR \pi_* (\Sym^{k+1}(\cV)) \to \RR \pi_*(\omega_\pi)\]
 in the derived category of $S$.
\end{lemma}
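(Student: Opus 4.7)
The plan is to construct the map at the sheaf level on $\cC$ and then apply $\RR\pi_*$. That is, I will build a morphism of sheaves $\hat w\colon \Sym^{k+1}(\cV) \to \omega_\pi$ on $\cC$, and the result will follow by applying $\RR \pi_*$ and noting $\RR\pi_*(\omega_\pi)$ is the target.

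To construct the sheaf map, use the decomposition
\[ \Sym^{k+1}(\cV) \;=\; \bigoplus_{i+j=k+1} \Sym^i(\cV_1) \otimes \Sym^j(\cV_2). \]
Since $\hat w(x_0,\ldots,x_r,y) = y \cdot w(x_0,\ldots,x_r)$ is linear in $y$ and of degree $k$ in the $x_i$'s, I define the map to be zero on every summand except $\Sym^k(\cV_1) \otimes \cV_2$. On this distinguished summand, use that $w$, being a homogeneous polynomial of degree $k$ in $r+1$ variables, is an element of $\Sym^k((\CC^{r+1})^*)$, giving a pairing $\Sym^k(\CC^{r+1}) \to \CC$. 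Since $\cV_1 = \cL^{\oplus r+1} = \cL \otimes \CC^{r+1}$, tensoring this pairing with $\cL^{\otimes k}$ gives a morphism $\Sym^k(\cV_1) \to \cL^{\otimes k}$.

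Combining with the identification $\cV_2 = \cL^{\otimes -k} \otimes \omega_\pi$, the line bundle twists cancel exactly:
\[ \Sym^k(\cV_1) \otimes \cV_2 \;\longrightarrow\; \cL^{\otimes k} \otimes \cL^{\otimes -k} \otimes \omega_\pi \;=\; \omega_\pi. \]
Extending by zero on all other summands produces the desired sheaf morphism $\hat w\colon \Sym^{k+1}(\cV) \to \omega_\pi$, and applying $\RR\pi_*$ gives $\widetilde{\hat w}$. There is no real obstacle here: the content of the lemma is essentially the observation that the weight of $\hat w$ in $y$ versus the weight in the $x_i$'s is precisely engineered so that the corresponding twist on the target is $\omega_\pi$. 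This is of course the reason for introducing the auxiliary variable $y$ in passing from $w$ to $\hat w$ and for choosing $\cV_2$ as $\cL^{\otimes -k} \otimes \omega_\pi$ in the definition of $\cV$.
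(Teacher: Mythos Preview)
Your proposal is correct and follows essentially the same approach as the paper: both construct a morphism of sheaves $\hat w\colon \Sym^{k+1}(\cV) \to \omega_\pi$ on $\cC$ and then apply $\RR\pi_*$. The only difference is presentational: the paper builds the sheaf map monomial-by-monomial in local coordinates (symmetrizing each monomial $c_j\prod x_{j_i}\cdot y$ over the tensor factors), whereas you use the decomposition $\Sym^{k+1}(\cV_1\oplus\cV_2)=\bigoplus_{i+j=k+1}\Sym^i(\cV_1)\otimes\Sym^j(\cV_2)$ together with the identification of $w$ as an element of $\Sym^k((\CC^{r+1})^*)$; these are two descriptions of the same map.
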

\begin{proof}  This is explained in a general context in Section 3.1.4 of \cite{CFFGKS}.  In this setting it can be seen most easily using local coordinates.  
First, let 
 $$M_j = c_j \prod_{i=1}^k x_{j_i} \cdot y$$
be a monomial of $\hat w$.  
 Given a local section $(s,p) = (s_0, \ldots, s_r, p)$ of $\cV$, note that 
 \[M_j(s,p) := c_j\cdot \bigotimes_{i=1}^k s_{j_i} \otimes p\]
 is a local section of $\omega_\pi$.  Define a map 
 \[\bigotimes_{i=1}^{k+1} \cV \to \omega_\pi\] by sending a section $(s,p)^1 \otimes \cdots \otimes (s,p)^{k+1}$ to 
 \[\sum_{\sigma \in S_{k+1}} \frac{c_j}{(k+1)!}  \bigotimes_{i=1}^k s_{j_i}^{\sigma(i)} \otimes p^{\sigma(k+1)}.\]
 This expression is symmetric in the factors of $\cV$ and therefore induces a map \[\hat M_j\colon \Sym^{k+1}(\cV) \to \omega_\pi.\] Summing over all monomials of $\hat w$ yields a map of vector bundles
\begin{equation}\label{e:Msum} \hat w :=\sum_j \hat M_j\colon  \Sym^{k+1}(\cV) \to \omega_\pi.\end{equation}

Pushing forward via $\pi\colon \cC \to S$, we obtain the desired morphism $$ \widetilde{\hat w}:\RR \pi_* (\Sym^{k+1}(\cV)) \to \RR \pi_*(\omega_\pi)$$ in the derived category of $S$.
  \end{proof}

%

For simplicity of exposition, we will make the following assumption for the remainder of this section.
\begin{assumption}\label{ass1}
Assume that the bundles $A$ and $B$ are such that the map $\tilde \alpha$ exists as a map of \emph{complexes}.  
\end{assumption}
We warn the reader that this assumption may not in fact hold in all cases.  We will explain how to work around it in Section~\ref{s:ts}.

Given Assumption~\ref{ass1}, $\tilde \alpha$ can be represented by a map of chain complexes
\begin{equation}\label{e:cha1}
\begin{tikzcd}
\Sym^{k+1}(A) \ar[d] \ar[r, "d_{k+1}"] & \Sym^k(A) \otimes B \ar[d, "\tilde \alpha"] \ar[r, "d_k"] & \Sym^{k-1} (A)\otimes \wedge^2 (B) \ar[d] \ar[r, "d_{k-1}"] & \cdots \\
0 \ar[r] & \cO_S \ar[r]& 0\ar[r] & \cdots.
\end{tikzcd}
\end{equation}
By abuse of notation we will also denote by $\tilde \alpha\colon \Sym^k(A) \otimes B \to \cO_S$ the only non-zero vertical map.

\begin{proposition}
The differential $d \hat w$ induces a map of $\cO_S$-modules over $S$:
\begin{flalign}\label{a:dw}
\widetilde{d\hat w} \colon  \Sym^k(\pi_*\cV) \otimes \RR^1 \pi_*\cV &\to \cO_S.
\end{flalign}
\end{proposition}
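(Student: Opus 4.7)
The plan is to build $\widetilde{d\hat w}$ in close analogy with $\tilde\alpha$ from Proposition~\ref{palphap}, now polarizing $\hat w$ to extract one distinguished $\cV$-slot and placing $\RR^1\pi_*$ on that slot. Concretely, I would produce $\widetilde{d\hat w}$ as the composition
\[\Sym^k(\pi_*\cV)\otimes \RR^1\pi_*\cV \xrightarrow{\mu} \RR^1\pi_*\bigl(\Sym^k(\cV)\otimes\cV\bigr) \xrightarrow{\RR^1\pi_*(d\hat w)} \RR^1\pi_*(\omega_\pi) \xrightarrow{\operatorname{tr}} \cO_S.\]

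First I would construct $d\hat w$ at the vector-bundle level, exactly as $\hat w$ was constructed in Lemma~\ref{l:wmap}. For each monomial $M_j = c_j \prod_{i=1}^k x_{j_i}\cdot y$ of $\hat w$, partial differentiation produces a sum of maps that assemble into a multilinear pairing on $k+1$ copies of $\cV$, symmetric in $k$ of the factors and linear in the distinguished remaining factor. Summing over monomials and descending to the symmetric power in the first $k$ slots yields an $\cO_{\cC}$-linear bundle morphism
\[d\hat w\colon \Sym^k(\cV)\otimes \cV \longrightarrow \omega_\pi.\]

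Next I would build $\mu$ as a two-step composition. Step (a): the canonical map $\Sym^k(\pi_*\cV)\to \pi_*(\Sym^k\cV)$, induced by adjunction from the multiplication $\Sym^k(\pi^*\pi_*\cV)\to \Sym^k(\cV)$ in the symmetric algebra of $\cV$. Step (b): the standard cup-product pairing $\pi_*F\otimes \RR^1\pi_*G\to \RR^1\pi_*(F\otimes G)$, an edge morphism arising from $\mathbb{L}\pi^*\pi_* F\to F$ together with the projection formula. Taking $F=\Sym^k\cV$ and $G=\cV$ and composing (a) and (b) gives $\mu$. Then applying $\RR^1\pi_*$ to $d\hat w$ and finally the Serre-duality trace $\RR^1\pi_*(\omega_\pi)\xrightarrow{\sim}(\pi_*\cO_{\cC})^\vee\to \cO_S$ finishes the construction.

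The main subtlety is that $\pi_*\cV$ need not be locally free, so one must check that the cup-product in Step (b) is a well-defined $\cO_S$-linear map of sheaves; this reduces to the projection formula and flatness of $\pi$. Everything else is parallel to the construction of $\tilde\alpha$, and because $d\hat w$ is a genuine bundle map on $\cC$ no derived-category manipulations are required beyond the single application of $\RR^1\pi_*$ at the end — this is precisely why $\widetilde{d\hat w}$ lands in the sheaf $\cO_S$ rather than in $\cO_S[-1]$ as $\tilde\alpha$ did.
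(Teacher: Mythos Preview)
Your proposal is correct and follows essentially the same strategy as the paper: build a bundle map $d\hat w\colon \Sym^k(\cV)\otimes\cV\to\omega_\pi$ on $\cC$, push it forward, and compose with the natural/cup-product map and the trace. The only organizational difference is that the paper routes the composition through the derived category---constructing a morphism $\Sym^k(\RR\pi_*\cV)\otimes\RR\pi_*\cV\to\cO_S[-1]$, precomposing with $\Sym^k(\pi_*\cV)\hookrightarrow\Sym^k(\RR\pi_*\cV)$, and then applying $H^1$ together with the canonical map $\Sym^k(\pi_*\cV)\otimes\RR^1\pi_*\cV\to H^1\bigl(\Sym^k(\pi_*\cV)\otimes\RR\pi_*\cV\bigr)$---whereas you work directly at the sheaf level via the cup product $\pi_*F\otimes\RR^1\pi_*G\to\RR^1\pi_*(F\otimes G)$. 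These are equivalent packagings of the same map; your version is slightly more elementary, while the paper's formulation makes the parallel with $\tilde\alpha$ (and the subsequent comparison in Proposition~\ref{p:supp0}) more transparent.
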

\begin{proof}
By an analogous derivation as in \eqref{e:Msum}, the function $\frac{\partial \hat w}{\partial x_i}$ defines a map of 
$\cO_\cC$-modules:
\[\Sym^k(\cV) \to \cL^\vee \otimes \omega_\pi.\] Tensoring with $\cL$, we obtain a map:
\begin{align}\label{e:dw1}
{\frac{\partial \hat w}{\partial x_i}}\colon \Sym^k(\cV) \otimes \cL  
& \to \omega_\pi.
\end{align}
Similarly, one can define: 
\[{w} = \frac{\partial \hat w}{\partial y} \colon \Sym^k(\cV) \otimes (\cL^{-k}\otimes \omega_\pi) \to \omega_\pi.\]

For $0 \leq i \leq r$ let $\pi_i:\cV \to  \cL$ be the  the projection onto the $i$th summand of $\cV$ and let $\pi_y\colon \cV \to \cL^{-k}\otimes \omega_\pi$ be the projection onto the last summand $\cL^{-k}\otimes \omega_\pi$.  Then 
\begin{equation}\label{e:sdw} d\hat w:= \frac{1}{k+1} \left(\sum_{i=0}^r {\frac{\partial \hat w}{\partial x_i}} \circ \op{id}_{\Sym^k(\cV)} \otimes \pi_i +  w \circ \op{id}_{\Sym^k(\cV)} \otimes \pi_y \right)\end{equation} gives a map of vector bundles
$\Sym^k(\cV) \otimes \cV \to \omega_\pi$.  Pushing forward via $\pi\colon \cC \to S$, we obtain a morphism \[\widetilde{d\hat w}\colon \RR \pi_* (\Sym^{k}(\cV) \otimes \cV) \to \RR \pi_*(\omega_\pi)\] in the derived category of $S$.

Composing with the natural map (defined in an analogous manner to \eqref{e:nat})
\[\op{nat}\colon \Sym^{k} (\RR \pi_* \cV) \otimes \RR \pi_*\cV \to \RR \pi_* (\Sym^{k}(\cV)) \otimes \RR \pi_* \cV \to \RR \pi_* (\Sym^{k}(\cV) \otimes \cV) \]
and the vertical maps in \eqref{e:map}, we obtain a morphism
\[\Sym^{k} (\RR \pi_* \cV) \otimes \RR \pi_*\cV \to \cO_S[-1].\]

By composing this with the morphism 
\[\Sym^{k} ( \pi_* \cV) \otimes \RR \pi_*\cV \to \Sym^{k} (\RR \pi_* \cV) \otimes \RR \pi_*\cV\]
and applying $H^1$ to the composition we obtain a map of $\cO_S$-modules 
\begin{equation}\label{e:dw1}H^1\left(\Sym^{k} ( \pi_* \cV) \otimes \RR \pi_*\cV\right) \to \cO_S.\end{equation}
There is a canonical map 
\begin{equation}\label{e:dw2}\Sym^{k} ( \pi_* \cV) \otimes \RR^1 \pi_*\cV \to H^1\left(\Sym^{k} ( \pi_* \cV) \otimes \RR \pi_*\cV\right).\end{equation}
The composition of \eqref{e:dw1} with \eqref{e:dw2} gives the desired map. 


\end{proof}

\begin{proposition}\cite[Lemma 3.6.3]{CFFGKS} \label{p:supp0}
The following diagram commutes
\[
\begin{tikzcd}
\Sym^k( \pi_* \cV) \otimes B \ar[dr, two heads, "\op{id}\otimes H^1(-)"] \ar[rr, "\tilde \alpha"] && \cO_S \\
&\Sym^k(\pi_*\cV) \otimes \RR^1 \pi_*\cV \ar[ur, "\widetilde{d \hat w}"]&
\end{tikzcd}
\] 
where $H^1(-)$ is the map $B \to B/A = \RR^1 \pi_*\cV$.
\end{proposition}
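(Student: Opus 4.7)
The plan is to reduce the identity to the classical polarization relation between a homogeneous polynomial and its derivative. Concretely, I would first establish the bundle-level identity
\[
\hat M \circ m \;=\; d\hat w \;:\; \Sym^k \cV \otimes \cV \longrightarrow \omega_\pi,
\]
where $m\colon \Sym^k\cV \otimes \cV \to \Sym^{k+1}\cV$ is the symmetric product and $\hat M$ is the map of (e:Msum). This is a direct computation: evaluating the symmetric multilinear form $\hat M$ at $(s,\ldots,s,v)$ with $v = (v_0,\ldots,v_r,q)$ yields $\tfrac{1}{k+1}\!\left(\sum_i v_i \partial_i \hat w(s) + q\cdot w(s)\right)$, which matches the defining formula (e:sdw) for $d\hat w$ applied to $(s^{\otimes k}, v)$. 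This is just Euler's/polarization identity for a polynomial homogeneous of degree $k+1$, and fixes the role of the $1/(k+1)$ normalization.

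The first step in the argument proper is to check that $\tilde\alpha$ restricted to $\Sym^k(\pi_*\cV)\otimes B$ factors through the quotient $B \twoheadrightarrow B/\op{im}(A) = \RR^1 \pi_* \cV$, giving a well-defined map $\bar\alpha\colon \Sym^k(\pi_*\cV)\otimes \RR^1\pi_*\cV \to \cO_S$. For this I would use that $\tilde\alpha$ is a chain map whose target is $\cO_S$ concentrated in degree one, so the composition $\tilde\alpha \circ d_{k+1}\colon \Sym^{k+1}A \to \cO_S$ vanishes identically. Applied to $s^{\otimes k}\cdot a$ with $s \in \pi_*\cV \subseteq \ker(A\to B)$, the Koszul differential gives $k\, s^{\otimes (k-1)}\cdot a \otimes d(s) + s^{\otimes k} \otimes d(a)$; the first summand vanishes because $d(s) = 0$, whence $\tilde\alpha(s^{\otimes k}\otimes d(a)) = 0$ as required.

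The second and main step is to identify $\bar\alpha$ with $\widetilde{d\hat w}$. Both maps are constructed by the same recipe: take a bundle-level morphism $\Sym^k\cV \otimes \cV \to \omega_\pi$, push forward by $\RR\pi_*$, compose with the natural maps of (e:nat) and (e:dw2), and finish with Serre duality and the trace. For $\widetilde{d\hat w}$ this bundle-level morphism is $d\hat w$ by construction. For $\bar\alpha$, after unwinding the Koszul model (e:cha1) of $\Sym^{k+1}[A\to B]$ and applying the natural map $\Sym^{k+1}\RR\pi_*\cV \to \RR\pi_*\Sym^{k+1}\cV$, the relevant bundle-level morphism turns out to be $\hat M \circ m$. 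The polarization identity from the first paragraph then forces $\bar\alpha = \widetilde{d\hat w}$, which is exactly what the diagram asserts.

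The hard part will be the bookkeeping in this last identification. One must verify, on the piece $\Sym^k(\pi_*\cV)\otimes B$ of the Koszul model of $\Sym^{k+1}[A\to B]$, that after passing to $\RR^1\pi_*\cV$ the chain-level map $\tilde\alpha$ computes the same derived morphism as $\RR\pi_*\hat M$ pre-composed with the natural map of (e:nat), under the canonical identification of $\Sym^k(\pi_*\cV)\otimes \RR^1\pi_*\cV$ with a direct summand of $H^1\!\left(\Sym^{k+1}\RR\pi_*\cV\right)$. In essence this is a purely combinatorial compatibility between the Koszul complex model of derived symmetric powers and the symmetrization $m$; keeping track of multiplicities (in particular the factor $k+1$ that balances the $1/(k+1)$ in $d\hat w$) is the only real subtlety.
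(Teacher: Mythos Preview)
Your proposal is correct and follows essentially the same route as the paper: both arguments hinge on the bundle-level Euler/polarization identity $\hat w \circ m = d\hat w\colon \Sym^k\cV \otimes \cV \to \omega_\pi$, and then trace this through the derived pushforward and the natural maps of \eqref{e:nat} to identify the two paths. The paper packages the second step into a single large commutative diagram in the derived category and applies $H^1$, whereas you separate out the factoring of $\tilde\alpha$ through $\RR^1\pi_*\cV$ as an explicit preliminary step; this is a cosmetic difference in presentation, not in content.
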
 

\begin{proof}
The proof rests on Euler's homogeneous function theorem, which states that 
\[\hat w = \frac{1}{k+1} \left( \sum_{i=0}^r x_i \cdot \frac{\partial}{\partial x_i} \hat w + y \cdot w\right).\]
A direct calculation on local sections then shows that the following diagram commutes
\begin{equation}\label{e:Euler2}
\begin{tikzcd}
\Sym^{k}\cV \otimes \cV \ar[dr, "d\hat w"]  \ar[r, "m"] & \Sym^{k+1}(\cV) \ar[d, "\hat w"]\\
& \omega_\pi,
\end{tikzcd}
\end{equation}
where $d\hat w$ and $\hat w$ are defined in \eqref{e:sdw} and \eqref{e:Msum} and $m$ is the map 
$(v_1 \otimes \cdots \otimes v_k) \otimes v \mapsto v_1 \otimes \cdots \otimes v_k \otimes v$.

Consider the following commutative diagram
\begin{equation}\label{dh1}
\begin{tikzcd}
\Sym^k(\pi_*\cV) \otimes B[-1] \ar[d] & \\
\Sym^k(\pi_*\cV) \otimes \RR \pi_*\cV \ar[d] & \\
\Sym^k(\RR \pi_*\cV) \otimes \RR \pi_*\cV \ar[d]  \ar[rd]& \\
\RR \pi_*(\Sym^k(\cV)) \otimes \RR \pi_*\cV \ar[d] & \Sym^{k+1}(\RR \pi_*\cV) \ar[d] \\
\RR \pi_*(\Sym^k(\cV) \otimes \cV) \ar[dr] \ar[r] &  \RR \pi_*(\Sym^{k+1}(\cV)) \ar[d] \\
& \RR \pi_*(\omega_\pi) \ar[d]\\
& \cO_S[-1].
\end{tikzcd}
\end{equation}
Commutativity of the lower triangle is a result of \eqref{e:Euler2}.
Commutativity of the upper trapezoid follows from the commutativity of the diagram below together with adjunction and the projection formula.
\begin{equation}
\begin{tikzcd}
 & \LL \pi^* \Sym^k(\RR\pi_*\cV) \otimes \LL \pi^* \RR \pi_*\cV \ar[d]& \\
&  \Sym^k(\LL \pi^* \RR \pi_*\cV) \otimes \LL \pi^* \RR \pi_*\cV \ar[dl] \ar[dd] \ar[dr]  & \\
\Sym^k(\cV) \otimes \LL \pi^* \RR \pi_*\cV \ar[dr] & & \Sym^{k+1}(\LL \pi^* \RR \pi_*\cV) \ar[d] \\
& \Sym^k(\cV) \otimes \cV \ar[r] & \Sym^{k+1}(\cV).
\end{tikzcd}
\end{equation}
Applying $H^1$ to  \eqref{dh1}, the left path becomes $\widetilde{d \hat w} \circ \op{id}\otimes H^1(-)$ and the right path becomes $\tilde \alpha$.  The proposition follows.
\end{proof}

\begin{definition}\cite[Definition~3.3]{CL}
Let $\cc F$ be a coherent sheaf on a stack $S$.  A \emph{cosection} of $\cc F$ is a homomorphism of $\cO_S$-modules 
\[ \sigma\colon \cc F \to \cO_S.\]
Given a cosection $\sigma\colon \cc F \to \cO_S$, we denote by $\{\sigma \equiv 0\}$ the locus 
\[ \{ \sigma \equiv 0\} = \Big\{ s \in S \; \Big| \; \sigma|_s\colon \cc F \otimes_{\cO_S} \mathbf k(s) \to \mathbf k(s) \text{ vanishes}\Big\}.\]
\end{definition}

The map of vector bundles $\tilde \alpha\colon \Sym^k(A) \otimes B \to \cO_S$ defines a cosection 
\[\alpha\colon E= \tau^*(B) \to \cO_{\tot(A)}\]
 as follows.  Given a local section $b$ of $E$, define 
\[\alpha(b) := \tau^*(\tilde \alpha)\left( \frac{a ^k}{k!} \otimes b\right) \in \cO_{\tot(A)},\]
where $a \in \Gamma(\tot(A), \tau^*(A))$ is the tautological section.  Here we view $\frac{a ^k}{k!}$ as the tautological section of $\Sym^k \tau^*(A)$.
%
Over $U$ we now have the following maps of $\cO_U$-modules,
\[
\begin{tikzcd}
 \cO_{U} \ar[r, "\beta"] & \ar[d] E \ar[r, "\alpha"] & \cO_{U}\\
& U. & 
\end{tikzcd}
\]

The map $\widetilde{d\hat w}$  defines a cosection of $ \tau^* \left(  \RR^1 \pi_* \cV \right)$ over $\sMbar_{g, n}(\PP^r, d)^p_S$ which, by abuse of notation, we also denote by $d\hat w$:
 \begin{flalign} d\hat w\colon \tau^* \left(  \RR^1 \pi_* \cV \right) \to \cO_{\sMbar_{g, n}(\PP^r, d)^p_S}
 \end{flalign}
 The cosection $ d\hat w$ can be written in coordinates as:
 \begin{flalign}
 (\dot s, \dot p) & \mapsto \frac{1}{k+1} \left(  \frac{1}{k!}\sum_{i=0}^r \frac{\partial \hat w}{\partial x_i}(s_0, \ldots, s_r, p) \dot s_i + \frac{1}{k!}w(s_0, \ldots, s_r)\dot p \right).
\nonumber
\end{flalign}
where $(s,p) = (s_0, \ldots, s_r, p)$ is the tautological section of $ \pi_* \cV$ on $\tot(\pi_* \cV)$
 and $(\dot s, \dot p)$ is a section of $\tau^* \left(  \RR^1 \pi_* \cV \right)$.
 The expression on the right is then an element of $\RR^1(\omega_\pi) \cong \cO_{\tot(\cV)}$.
  (see \cite[Section~3.3]{CL}).

 Since $w$ defines an isolated singularity at the origin, the locus 
 \[\Big\{(x_0, \ldots, x_r) \;\Big|\; w(x_0, \ldots, x_r) = 0, \frac{\partial w}{\partial x_i}(x_0, \ldots, x_r) = 0\Big\}\]
 is the origin.  Consequently,
  the cosection $d \hat w$ is identically zero only when $p = 0$ and $w(s_0, \ldots, s_r) = f^*(w) = 0$. 
But  $\sMbar_{g, n}(X_w, d)_S \subset \sMbar_{g, n}(\PP^r, d)_S \subset \sMbar_{g, n}(\PP^r, d)^p_S$ is exactly the locus where $p$ and $f^*(w)$ vanish.  We conclude that:
\[ \{d\hat w \equiv 0\}  = \sMbar_{g, n}(X_w, d)_S.\]

\begin{corollary}\label{c:sup1}
The closed substack $Z(\beta) \cap \{\alpha \equiv 0\}$ is equal to $\sMbar_{g, n}(X_w, d)_S \subset U$.
\end{corollary}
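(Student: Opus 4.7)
The plan is to combine the identification $Z(\beta) = \sMbar_{g,n}(\PP^r, d)^p_S$ from Section~\ref{ss:sevr} with the compatibility diagram of Proposition~\ref{p:supp0}, in order to reduce the condition $\alpha \equiv 0$ on $Z(\beta)$ to the vanishing of the cosection $d\hat w$; the latter has already been identified set-theoretically with $\sMbar_{g, n}(X_w, d)_S$ in the paragraph preceding the corollary, so the desired equality will follow.

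First I would observe that on $Z(\beta) \subset U$ the tautological section $a \in \Gamma(U, \tau^* A)$ lifts canonically to a section of $\tau^*(\pi_* \cV)$. Indeed, by the construction of $\beta$ in Definition~\ref{d:u} we have $\beta(a) = 0$ on $Z(\beta)$, and the kernel of $A \to B$ is $\pi_* \cV$ by the long exact sequence~\eqref{e:LES1}. Under this lift, $a|_{Z(\beta)}$ matches the universal section $(s,p)$ of $\pi_*\cV$ carried by $Z(\beta) = \sMbar_{g,n}(\PP^r, d)^p_S$.

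Next I would unwind $\alpha$ on $Z(\beta)$ via Proposition~\ref{p:supp0}. By definition $\alpha(b) = \tau^*(\tilde \alpha)(a^k/k! \otimes b)$ for a local section $b$ of $E = \tau^* B$; once $a$ is viewed as a section of $\tau^*(\pi_*\cV)$, the proposition rewrites this as $\widetilde{d\hat w}\bigl((s,p)^k/k! \otimes H^1(b)\bigr)$, where $H^1(-)\colon B \to \RR^1 \pi_* \cV$ is the connecting map in~\eqref{e:LES1}. Because $\RR^1 \pi_* \cA = 0$, this map is surjective, hence so is its pullback $\tau^* H^1(-)\colon E \to \tau^*(\RR^1\pi_*\cV)$. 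Consequently, at a point of $Z(\beta)$ the condition $\alpha \equiv 0$ is equivalent to the condition that the cosection $d\hat w\colon \tau^*(\RR^1 \pi_*\cV) \to \cO$ vanishes there.

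The final step is to invoke the calculation immediately preceding the corollary, where Euler's homogeneous function theorem together with the isolated singularity assumption on $w$ identifies $\{d\hat w \equiv 0\}$ with $\sMbar_{g, n}(X_w, d)_S \subset Z(\beta)$. Chaining the two equivalences yields $Z(\beta) \cap \{\alpha \equiv 0\} = \sMbar_{g,n}(X_w, d)_S$, as required. The step I expect to require the most care is the second: one must match the tautological section $a$ with $(s,p)$ so that Proposition~\ref{p:supp0}, which is stated on the base $S$, is applied correctly after pullback along $\tau\colon U \to \Bun_{g,n,d}^\circ$; once that is in place the argument reduces to surjectivity of $H^1(-)$ and the already-established description of the vanishing locus of $d\hat w$.
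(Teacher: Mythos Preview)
Your proposal is correct and follows essentially the same route as the paper's own proof: restrict to $Z(\beta) = \sMbar_{g,n}(\PP^r,d)^p_S$, use Proposition~\ref{p:supp0} to factor $\alpha|_{Z(\beta)}$ as the surjection $E \twoheadrightarrow \tau^*(\RR^1\pi_*\cV)$ followed by $d\hat w$, and then invoke the identification $\{d\hat w \equiv 0\} = \sMbar_{g,n}(X_w,d)_S$ established just before the corollary. Your write-up simply makes explicit the two ingredients the paper leaves implicit (the lift of the tautological section $a$ to $\tau^*(\pi_*\cV)$ on $Z(\beta)$, and the surjectivity of $H^1(-)$); one small inaccuracy is that Euler's homogeneous function theorem is used in the proof of Proposition~\ref{p:supp0}, not in the identification of $\{d\hat w \equiv 0\}$, which relies only on the isolated-singularity hypothesis.
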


\begin{proof}
Consider the cosection $\alpha\colon E \to \cO_{U}$ of $E$ on $U$.  After restricting to $Z(\beta)$ and identifying this locus with $\sMbar_{g, n}(\PP^r, d)^p_S$, Proposition~\ref{p:supp0}  implies that on $Z(\beta)$, the cosection $\alpha$ factors as a surjection followed by $d\hat w$.  Thus the locus $Z(\beta) \cap \{\alpha \equiv 0\}$ equals $\{d\hat w \equiv 0\}= \sMbar_{g, n}(X_w, d)_S$. 
\end{proof}

\subsection{$\ZZ_2$-localized Chern character}\label{ss:kf0}
In Section~\ref{ss:zerolocus} we saw that given a vector bundle with a section, one could construct a refined Euler class supported on the vanishing locus of the section.  In this section we describe how one can generalize this construction to also incorporate the data of a cosection.  In this case the refined Euler class will be supported on the intersection of the vanishing locus of the section and the vanishing locus of the cosection.  This is desirable because, as we have seen, the vanishing locus of just the section may not be compact, whereas the intersection of the vanishing locus of the section and the vanishing locus of the cosection may be compact.  Having a virtual class supported on a compact space allows us to define numerical invariants, by integrating over the virtual class.

The generalization of the refined Euler class to incorporate a cosection was first described in \cite{PVold}.  The key is a modification of \emph{MacPherson's graph construction} \cite{Fu, Mac}.

Given a smooth variety $X$ and a vector bundle $E$ of finite rank,  the following identity holds \cite[Example 3.2.5]{Fu}
\begin{equation}\label{e:EC1} e(E) = \ch(\wedge^\bullet E^\vee) \td(E)\end{equation}
where $\ch$ is the Chern character, $\td$ is the Todd class (viewed as an element of $A_*(X)$), and $\wedge^\bullet E^\vee$ is the class in $K$-theory given by the alternating sum \[\oplus_{k = 0}^{\op{rk}(E)} (-1)^k \wedge^k E^\vee.\] 
This identity can be refined to include a section $\sigma \in \Gamma(X, E)$.  Define the Koszul complex
\[ K := \wedge^{\op{rk}(E)} E^\vee \to \cdots \to \wedge^2 E^\vee \to E^\vee \to \cO_X,\]
where the differential is given by contraction with respect to $\sigma$, $d(-) := - \lrcorner \sigma$.  
Since $\sigma$ is non-vanishing outside of $Z = Z(\sigma)$,  a standard linear algebra exercise shows this complex is exact outside of $Z(\sigma)$, i.e. $K$ is supported on $Z$.
In this setting,  MacPherson's graph construction describes
 a limiting procedure similar to that in Definition~\ref{conelim} 
 (see \cite[Section~18.1]{Fu} and \cite{Mac} for details).  This 
defines a \emph{localized Chern character}: \[\ch^X_Z(K)\colon A_*(X) \to A_*(Z).\]  
The following identity holds
\begin{align}\label{e:EC2}
e(E, \sigma) = \ch^X_Z(K)(\td(E)),
\end{align}
where $ e(E, \sigma)$ is the refined Euler characteristic of Definition~\ref{d:eref}.

An insight of Polishchuk--Vaintrob in \cite{PVold} was to adapt the definition of the localized Chern character to the case of two-periodic complexes.  Given an infinite two-periodic complex $K$ of vector bundles, let $Z$ denote the support of $K$.  
In \cite{PVold} a \emph{$\ZZ_2$-localized Chern character} is defined: \[ {}^{\ZZ_2} \ch^X_Z(K)\colon A_*(X) \to A_*(Z).\] 
The construction is a $\ZZ_2$-graded version of the original construction.

\begin{definition}\label{d:kf0}
Assume we have a vector bundle $E \to X$  with a section $\sigma \in \Gamma(X, E)$ and a cosection $\rho \in \hom(E, \cO_X)$ such that $\rho ( \sigma) = 0$.
Define 
the vector bundles \begin{align*}
F_0 &:= \oplus_{k} \wedge^{2k} E^\vee \\
F_{-1} &:= \oplus_{k}  \wedge^{2k+1} E^\vee,
\end{align*}
and define a differential between them $d(-):= -\wedge \rho + -\lrcorner \sigma$. One can check that $d_{i\pm1} \circ d_i = \op{id}_{F_i} \cdot \rho ( \sigma) = 0$.  Denote by $\{\rho, \sigma\}$ the two-periodic complex of vector bundles:
\[\{\rho, \sigma\}:= \cdots \to F_{-1} \xrightarrow{d_{-1}} F_0 \xrightarrow{d_0} F_{-1} \xrightarrow{d_{-1}} F_0 \to \cdots.\]
\end{definition}
It is shown in \cite[Lemma~1.4.1]{PV} that the support of $\{\rho, \sigma\}$ is $Z(\sigma) \cap \{\rho \equiv 0\}$. 
The complex $\{\rho, \sigma\}$ may be viewed as a two-periodic generalization of the Koszul complex to include a cosection; we will to refer it as a \emph{two-periodic Koszul complex}.

\subsection{The virtual class}\label{ss:vc0}
Let us now return to the setting of Section~\ref{ss:iw}.
Given Assumption~\ref{ass1}, we have a vector bundle $E \to U$, together with a section $\beta \in \Gamma(U, E)$ and a cosection $\alpha \in \hom(E, \cO_{U})$.  
\begin{proposition}\cite[Lemma~4.2.6]{PV} \label{p:compz}
The function $\alpha( \beta)$ is identically zero.
\end{proposition}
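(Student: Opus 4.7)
The plan is to show directly that $\alpha(\beta)$ is, up to a scalar, the composition $\tilde\alpha \circ d_{k+1}$ applied to $a^{k+1}/(k+1)!$, where $a$ is the tautological section of $\tau^*A$ over $\tot(A)$ and $d_{k+1}$ is the first differential in the Koszul-type complex \eqref{e:cha1}. Since under Assumption~\ref{ass1} the map $\tilde\alpha$ is a chain map, this composition is zero.

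First, I would unpack $\beta$ on $U$. By construction, $\beta$ is the section of $E = \tau^*B$ induced by the map of vector bundles $A \to B$ applied to the tautological section: explicitly, if we let $\delta : \tau^*A \to \tau^*B$ denote the pullback of the differential of the complex $[A\to B]$, then $\beta = \delta(a) \in \Gamma(U,\tau^*B)$. Consequently, by the definition of the cosection $\alpha$ given just before the proposition,
\[
\alpha(\beta) \;=\; \tau^*(\tilde\alpha)\Bigl(\tfrac{a^k}{k!}\otimes \delta(a)\Bigr) \;\in\; \cO_U.
\]

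Next, I would compare $a^k\otimes \delta(a)$ to a boundary. The differential
\[
d_{k+1}\colon \Sym^{k+1}(A) \longrightarrow \Sym^k(A)\otimes B
\]
in the symmetric power of the two-term complex $[A\to B]$ is given on a product by
\[
d_{k+1}(a_1\cdots a_{k+1}) \;=\; \sum_{i=1}^{k+1} a_1\cdots \widehat{a_i}\cdots a_{k+1}\otimes \delta(a_i),
\]
so that after pulling back along $\tau$ and specializing all $a_i$ to the tautological section $a$, one obtains
\[
d_{k+1}\Bigl(\tfrac{a^{k+1}}{(k+1)!}\Bigr) \;=\; \tfrac{a^k}{k!}\otimes \delta(a).
\]
Hence
\[
\alpha(\beta) \;=\; \tau^*(\tilde\alpha)\circ d_{k+1}\Bigl(\tfrac{a^{k+1}}{(k+1)!}\Bigr).
\]

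Finally, Assumption~\ref{ass1} and diagram \eqref{e:cha1} guarantee that $\tilde\alpha$ is a genuine map of chain complexes from $\Sym^{k+1}([A\to B])$ to $\cO_S[-1]$; since the target in degree $0$ is zero, being a chain map forces $\tilde\alpha\circ d_{k+1}=0$. Pulling back along $\tau$ preserves this vanishing, so $\alpha(\beta)\equiv 0$ on $U$.

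There is no real obstacle here beyond bookkeeping: the only thing to be careful about is the combinatorial factor relating $a^{k+1}/(k+1)!$ to $(a^k/k!)\otimes\delta(a)$, and the fact that pullback of a chain map along $\tau$ is again a chain map. Both are straightforward. A remark worth including is that, since the proposition is invoked later in a setting where Assumption~\ref{ass1} may fail, one should expect the argument to generalize by replacing $\tilde\alpha$ with a suitable representative up to homotopy, as addressed in Section~\ref{s:ts}.
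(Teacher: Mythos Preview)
Your proof is correct and follows essentially the same route as the paper: both identify $\alpha(\beta)$ with $\tau^*(\tilde\alpha)\bigl(\tfrac{a^k}{k!}\otimes\delta(a)\bigr)$, recognize this as $\tau^*(\tilde\alpha\circ d_{k+1})\bigl(\tfrac{a^{k+1}}{(k+1)!}\bigr)$, and conclude from the chain-map condition in \eqref{e:cha1}. Your write-up is slightly more explicit about the differential $d_{k+1}$ and the combinatorial factor, which is fine.
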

\begin{proof}
Recall from Definition~\ref{d:u}, that $U$ is an open subset of $\tot(A)$, $A \xrightarrow{\delta} B$ is a two term complex of vector bundles on $S$,  $E$ is the pullback of $B$ via the map $\tau\colon \tot(A) \to S$, and $\beta \in \Gamma(\tot(A), E)$ is the section induced by $\delta$.  

First note that the section $\beta $ is equal to $ \tau^*\delta(a)$, 
where $a \in \Gamma(\tot(A), \tau^*(A))$ is the tautological section.  The composition
$\alpha(\beta)$ is given by  $\tau^* \tilde \alpha \left(\frac{a^k}{k!} \otimes \tau^*\delta(a)\right)$, where $\tilde \alpha$ is defined in \eqref{e:cha1}.  On the other hand, $\frac{a^k}{k!} \otimes \tau^*\delta(a)$ is equal to $\tau^* d_{k+1} \left(\frac{a^{k+1}}{(k+1)!}\right)$.  By the commutativity of the first square of \eqref{e:cha1}, 
\[\alpha(\beta)  =\tau^*( \tilde \alpha \circ d_{k+1})\left(\frac{a^{k+1}}{(k+1)!}\right) = 0.\]

%
%

\end{proof}
By Proposition~\ref{p:compz}, we can construct the two-periodic Koszul complex $\{\alpha, \beta\}$ as above.  
The support of $\{\alpha, \beta\}$ is given by $Z(\beta) \cap \{\alpha \equiv 0\}$ which,
by Corollary~\ref{c:sup1}, is $ \sMbar_{g, n}(X_w, d)_S$.  In analogy with \eqref{e:EC2}, we make the following definition.
\begin{definition}\label{d:vcn0} Let 
$\sM_{X_w} := \sMbar_{g, n}(X_w, d)_S$.
Define the virtual class
\[
\left[ \sMbar_{g, n}(\PP^r, d)^p_S\right]^{\vir} := {}^{\ZZ_2} \ch^{U}_{\sM_{X_w}}(\{\alpha, \beta\}) (\td(E)) \in A_*(\sMbar_{g, n}(X_w, d)_S).\]
\end{definition}

\begin{remark}\label{r:vc0}
If Assumption~\ref{ass1} held for $S = \Bun_{g, n, d}^\circ$, we would obtain a virtual class 
\[
\left[ \sMbar_{g, n}(\PP^r, d)^p\right]^{\vir} \in A_*(\sMbar_{g, n}(X_w, d)).
\]
 Unfortunately Assumption~\ref{ass1} may not hold in this case.  In Section~\ref{s:ts} we explain how to overcome this difficulty.
\end{remark}

\section{Technical assumptions and the ``two-step procedure''}\label{s:ts}
The construction of a virtual class on $\sMbar_{g, n}(X_w, d)$ using the method of Section~\ref{ss:vc0} would require 
Assumption~\ref{ass1} holding for $\Bun_{g,n,d}^\circ$.  This is not known, and likely not true.  In this section we describe a ``two-step procedure'' to circumvent the problem.  For details and a general formulation see Section~4 of \cite{CFFGKS}.

\subsection{Step 1: a projective embedding}
Fix $g, n ,d \geq 0$ such that $2g-2 + n > 0$.
We first embed $\sMbar_{g,n}(\PP^{r}, d)$ into (the smooth locus of) a larger space of stable maps.

Consider the stabilization map:
\[\st\colon \Bun_{g,n,d}^\circ \to \sMbar_{g,n},\]
which forgets the line bundle $\cL$ and stabilizes the curve.  Let $\tilde \st\colon \fC \to \cC_{\st}$ denote the map between the universal curves over $\Bun_{g,n,d}^\circ \to \sMbar_{g,n}$.  The coarse underlying space $\uC_{\st}$ associated to the Deligne--Mumford stack $\cC_{\st}$ is projective, and so can be embedded into $\PP^{N-1}$.
We obtain the following diagram:
\[
\begin{tikzcd}
\cL \ar[d] & & \cO_{\PP^{N-1}}(1) \ar[d] \\
\fC \ar[r, "\tilde{\st}"] \ar[d, "\pi"] & \cC_{\st} \ar[r,"\rho"] \ar[d] &  \uC_{\st} \subset \PP^{N-1}\\
\Bun_{g,n,d}^\circ \ar[r, "\st"] & \sMbar_{g,n}. &
\end{tikzcd}
\]

Let $\cN\to \fC$ denote the line bundle $\cN = \tilde{\st}^* \circ \rho^* ( \cO_{\PP^{N-1}}(1))$.  One can choose an appropriate projective embedding $ \uC_{\st} \subset \PP^{N-1}$ so that $\cL \otimes \cN$ is $\pi$-acyclic.  Consider the Euler sequence
\[ 0 \to \cO_{\PP^{N-1}} \to \cO_{\PP^{N-1}}(1)^{\oplus N} \to T\PP^{N-1} \to 0.\]
Pulling this back to $\fC$ and tensoring with $\cV_1 = \cL^{\oplus r+1}$, we obtain
\[0 \to \cV_1 \to \cM^{\oplus M} \to \cQ \to 0\]
where $\cM := \cL \otimes \cN$, $M = (r+1)N$ and $\cQ = \cV_1 \otimes \tilde{\st}^* \circ \rho^*(T\PP^{N-1})$.
Pushing forward via $\pi$ we obtain the long exact sequence on $\Bun_{g,n,d}^\circ$:
\begin{align}
0 \to & \pi_*(\cV_1) \to  \pi_* \left(\cM^{\oplus M}\right) \to  \pi_* (\cQ) \\
\to & \RR^1 \pi_*(\cV_1) \to 0 \nonumber
\end{align}
where the last terms are zero because $\cM$ was constructed to be $\pi$-acyclic.
Letting
\begin{align*}A_1 ' := & \pi_* \left(\cM^{\oplus M}\right), \\ B_1 ' := &\pi_* (\cQ),\end{align*}
we see that $[A_1 ' \to B_1 ']$ is a two term resolution of $\RR\pi_*(\cV_1)$ by vector bundles.
We also obtain an embedding $\tot(\pi_* \cV_1)$ in $\tot(A_1 ')$.  Let 
\[\tau\colon \tot(A_1 ') \to \Bun_{g,n,d}^\circ\]
 denote the map forgetting the sections.  Then $\tot(\pi_*\cV_1)$ is the zero locus of the section $\beta_1 \in \Gamma(\tot(A_1 '), \tau^*(B_1 '))$ induced by the map $A_1 ' \to B_1 '$.

The section of $\cN^{\oplus N}$ obtained by pulling back the first terms of the Euler sequence is nowhere vanishing.  Therefore nowhere vanishing sections of $\cV_1$  are mapped to nowhere vanishing sections of $\cM^{\oplus M}$.  
Define $U_1$ to be the open substack of $\tot(A_1 ')$ consisting of sections of $\cM^{\oplus M}$ which are nowhere vanishing.  Note that these sections correspond to a map to $\PP^{M-1}$ of some degree $e$.  This realizes $U_1$ as an open subset of $\sMbar_{g,n}(\PP^{M-1}, e)$.  By construction $U_1$ is a smooth Deligne--Mumford stack, lying in the smooth 
locus of $\sMbar_{g,n}(\PP^{M-1}, e)$.  Let $\overline U_1$ denote the closure of $U_1$ in $\sMbar_{g,n}(\PP^{M-1}, e)$.
The zero locus of $\beta_1|_{U_1}$ is equal to $\sMbar_{g,n}(\PP^r,d)$.  We have
\[\sMbar_{g,n}(\PP^r,d) \hookrightarrow U_1 \subset \overline U_1 \subset \sMbar_{g,n}(\PP^{M-1}, e).\]

\subsection{Step 2: an admissible resolution}\label{ss:s2}
The next step is to construct a second resolution, this time of $\tau^*(\RR \pi_*\cV)$ over $U_1$.  Because $\overline U_1$ has projective coarse moduli space, this can be done in such a way that the map $\tilde \alpha$ is realized at the level of complexes.

Let $\pi_{U_1}\colon \cC_{U_1} \to U_1$ and $\pi_{\overline U_1} \colon \cC_{\overline U_1} \to \overline U_1$ denote the universal curves over $U_1$ and $\overline U_1$ respectively.  Let $\cL_{U_1}$, $\cN_{U_1}$, and $\cM_{U_1}$ denote the pullback of $\cL$, $\cN$, and $\cM$ from $\fC$ to $\cC_{U_1}$.  These line bundles naturally extend to line bundles  $\cL_{\overline U_1}$, $\cN_{\overline U_1}$, and $\cM_{\overline U_1}$ over $\overline U_1$ as follows.  Since $\overline U_1 \subset \sMbar_{g,n}(\PP^{M-1}, e)$, there exists a universal map $f\colon \cC_{\overline U_1} \to \PP^{M-1}$.  Define
$\cM_{\overline U_1}$ to be the pullback of $\cO_{\PP^{M-1}}(1)$.  On the other hand by forgetting $f$ and stabilizing the curve, we get a map $\overline U_1 \subset \sMbar_{g,n}(\PP^{M-1}, e) \to \sMbar_{g, n}$, and consequently a map
\[ \cC_{\overline U_1} \to \cC_{\st} \to \uC_{\st} \hookrightarrow \PP^{N-1}.\]
Define $\cN_{\overline U_1}$ to be the pullback of $\cO_{\PP^{N-1}}(1)$.  Finally, define 
\[\cL_{\overline U_1} := \cM_{\overline U_1} \otimes \cN^\vee_{\overline U_1}.\]
The line bundle $\cL_{\overline U_1} \to \cC_{\overline U_1}$ is a degree $d$ line bundle over the family $\cC_{\overline U_1} \to \overline U_1$ of pre-stable curves, therefore there is an induced map $\overline U_1 \to \Bun_{g, n, d}$, which, when restricted to $U_1 \subset \overline U_1$, recovers the forgetful map $U_1 \to \Bun_{g, n ,d}^\circ \subset \Bun_{g, n, d}$. 

Let \begin{align*}\cV_{\overline U_1, 1} := &\cL_{\overline U_1}^{\oplus r+1},\\ \cV_{\overline U_1, 2}  := & \cL_{\overline U_1}^{\otimes -k} \otimes \omega_{\pi_{\overline U_1}},\end{align*} 
and define \[\cV_{\overline U_1} := \cV_{\overline U_1, 1}  \oplus \cV_{\overline U_1, 2} .\]
Let 
$\cV_{ U_1, 1}, \cV_{ U_1, 2},$ and $\cV_{U_1}$
denote the corresponding restrictions to $\cC_{U_1}$.

Using the fact that $\overline U_1$ has projective coarse underlying space,  we can resolve $\cV_{\overline U_1}$ by a two term complex of vector bundles for which the map $\tilde \alpha$ is realized at the level of complexes.
\begin{proposition}\cite[Proposition 3.5.2]{CFFGKS}\label{p:maps}
On $\overline U_1$, there exist resolutions 
\begin{align*}[A_{\overline U_1, 1} '' \to B_{\overline U_1, 1} ''] \sim &\RR \pi_*(\cV_{\overline U_1, 1} ), \\ [A_{\overline U_1, 2} '' \to B_{\overline U_1, 2} ''] \sim & \RR \pi_*(\cV_{\overline U_1, 2} )\end{align*}
such that, if we let \[[A_{\overline U_1} '' \to B_{\overline U_1} ''] := [A_{\overline U_1, 1} '' \oplus A_{\overline U_1, 2} '' \to B_{\overline U_1, 1} '' \oplus B_{\overline U_1, 2} ''],\]
there exists
a map
\[\tilde \alpha\colon \Sym^k (A_{\overline U_1} '') \otimes B_{\overline U_1} '' \to \cO_{\overline U_1}\]
fitting into the diagram \eqref{e:cha1} on $\overline U_1$ and realizing the map from Proposition~\ref{palphap} at the level of complexes.
\end{proposition}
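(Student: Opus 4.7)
The plan hinges on the projectivity of the coarse moduli space of $\overline U_1$, which guarantees the existence of a $\pi_{\overline U_1}$-relatively very ample line bundle on the universal curve $\cC_{\overline U_1}$. Using this I will construct the resolutions $[A_i'' \to B_i'']$ via sufficiently large twists, chosen to kill the higher direct images not only of the individual bundles $\cA_i''$ but also of every mixed symmetric--wedge combination $\Sym^a(\cA'') \otimes \wedge^b(\cB'')$ that appears in the chain complex \eqref{e:cha1}. The derived-category morphism $\tilde\alpha$ is already provided by Proposition \ref{palphap}; the substance of the proposition is that, with a careful choice of resolutions, $\tilde\alpha$ is realized by a genuine chain map.

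Concretely, I would first pick a $\pi_{\overline U_1}$-relatively very ample line bundle $\cO(1)$ on $\cC_{\overline U_1}$; the bundle $\omega_{\pi_{\overline U_1},\log} \otimes \cM_{\overline U_1}^{\otimes 3}$ is relatively ample by the stability condition defining $\sMbar_{g,n}(\PP^{M-1},e)$, so a sufficient power will do. For $N \gg 0$ the sheaves $\cV_{\overline U_1,i}^\vee(N)$ are $\pi_{\overline U_1}$-acyclic and $\pi_{\overline U_1}$-globally generated, so dualizing the tautological surjection and twisting back gives embeddings
\begin{equation*}
\cV_{\overline U_1, i} \hookrightarrow \cA_i'' := \pi^*\pi_*\bigl(\cV_{\overline U_1, i}^\vee(N)\bigr)^\vee \otimes \cO(-N).
\end{equation*}
Letting $\cB_i''$ denote the cokernel and setting $A_i'' := \pi_* \cA_i''$ and $B_i'' := \pi_* \cB_i''$, the argument of Section \ref{ss:sos} shows that $[A_i'' \to B_i'']$ is a two-term locally free resolution of $\RR \pi_{\overline U_1,*} \cV_{\overline U_1, i}$. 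The additional requirement is to take $N$ so large that every sheaf of the form $\Sym^a(\cA_1'' \oplus \cA_2'') \otimes \wedge^b(\cB_1'' \oplus \cB_2'')$ with $a+b \leq k+1$ is also $\pi_{\overline U_1}$-acyclic; since each such sheaf admits a finite filtration whose successive quotients are pullbacks from $\overline U_1$ tensored with sufficiently negative powers of $\cO(1)$, this is ensured by a single application of Serre vanishing on the fibers.

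Under these acyclicities, the complex $\Sym^{k+1}([A'' \to B''])$ is identified, term by term, with the pushforward along $\pi_{\overline U_1}$ of the naive Koszul--symmetric resolution of $\Sym^{k+1}(\cV_{\overline U_1})$ built from the short exact sequences $0 \to \cV_{\overline U_1, i} \to \cA_i'' \to \cB_i'' \to 0$. In particular the first horizontal quasi-isomorphism in \eqref{e:map} is realized at the chain level, and the natural transformation \eqref{e:nat} is strict because under these hypotheses adjunction and the projection formula do not introduce derived corrections. The map $\hat w$ of \eqref{e:Msum} is a genuine map of sheaves on $\cC_{\overline U_1}$, so applying $\pi_{\overline U_1,*}$ and composing with Serre duality and the trace yields a literal chain morphism from $\Sym^{k+1}([A'' \to B''])$ to $\cO_{\overline U_1}[-1]$; its only nonzero component sits in the position of $\Sym^k(A'') \otimes B''$ and constitutes the desired $\tilde\alpha$. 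Comparing this construction with \eqref{e:map} shows that the chain map represents the derived-category morphism of Proposition \ref{palphap}.

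The principal obstacle is bookkeeping the acyclicities: one must exhibit a single integer $N$ for which all of the mixed tensor combinations above become $\pi_{\overline U_1}$-acyclic simultaneously, and one must verify that the induced maps between the various pushforwards assemble compatibly with the differentials of \eqref{e:cha1}. Neither step is automatic on a general Artin base such as $\Bun_{g,n,d}^\circ$; the purpose of the projective embedding into $\overline U_1$ is precisely to make Serre vanishing available and thereby to rigidify the derived-category data of Proposition \ref{palphap} into an honest chain map.
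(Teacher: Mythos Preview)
The paper does not give its own proof of this proposition; it is simply quoted from \cite[Proposition~3.5.2]{CFFGKS}. So there is no in-paper argument to compare against, and your sketch is being evaluated on its own merits.

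Your overall strategy is the right one and is almost certainly what \cite{CFFGKS} does: projectivity of the coarse space of $\overline U_1$ supplies a $\pi_{\overline U_1}$-relatively ample line bundle, and a single sufficiently large twist forces all the finitely many vanishing conditions needed to rigidify the derived-category morphism of Proposition~\ref{palphap} into a genuine chain map. This is precisely the point of passing from $\Bun_{g,n,d}^\circ$ to $\overline U_1$.

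Two points need tightening. First, a sign slip: following the construction in the Lemma of Section~\ref{ss:sos}, the embedding is $\cV_{\overline U_1,i} \hookrightarrow \pi^*\pi_*(\cV_{\overline U_1,i}^\vee(N))^\vee \otimes \cO(N)$, with a \emph{positive} twist, so that the $\cA_i''$ (and hence their symmetric and wedge products) have large fiberwise degree and are $\pi$-acyclic; your $\cO(-N)$ and ``sufficiently negative powers'' should both be reversed.

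Second, the sentence asserting that $\Sym^{k+1}([A'' \to B''])$ is ``identified, term by term, with the pushforward'' of the Koszul resolution of $\Sym^{k+1}(\cV_{\overline U_1})$ overstates the situation: the terms $\Sym^{k+1-j}(A'')\otimes \wedge^j(B'')$ and $\pi_*\bigl(\Sym^{k+1-j}(\cA'')\otimes \wedge^j(\cB'')\bigr)$ are related by the K\"unneth/cup-product map, not by an isomorphism. This does not break the argument, since the derived morphism $\tilde\alpha$ already factors through the natural transformation $\op{nat}$, and that transformation is an honest map of complexes; but you should phrase the step as ``composing with the chain-level $\op{nat}$'' rather than as a term-by-term identification. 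Relatedly, realizing $\widetilde{\hat w}$ at the chain level requires extending the sheaf map $\hat w$ from $\Sym^{k+1}(\cV_{\overline U_1})$ to the ambient bundle, and this extension step (or an equivalent choice of resolution for $\omega_{\pi_{\overline U_1}}$) deserves an explicit word.
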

%

Let \begin{align*}A_{U_1, 1}'' \to & B_{U_1, 1}'', \\ A_{U_1, 2}'' \to &B_{U_1, 2}'',\end{align*} denote the restrictions of \begin{align*}A_{\overline U_1, 1} '' \to &B_{\overline U_1, 1} '', \\ A_{\overline U_1, 2} '' \to & B_{\overline U_1, 2} '', \end{align*} to $U_1$.
Let \begin{align*} A_{U_1, 1}' := &\tau^*(A_1 '), \\ B_{U_1, 1}' := &\tau^*(B_1 ').\end{align*} In the derived category $D(U_1)$ we have the equivalence
\[[A_{U_1, 1}'' \to B_{U_1, 1}''] \sim [A_{U_1, 1}' \to B_{U_1, 1}'].\]
By standard arguments (see Lemma~3.6.5 of \cite{CFFGKS}) there exists another resolution $[A_{U_1, 1}   \to B_{U_1, 1}  ]$ of $\RR \pi_*(\cV_{ U_1, 1} )$ and a roof diagram 
\begin{equation}\label{e:roofdiagram}
\begin{tikzcd}
& A_{U_1, 1}  
 \ar[ld, twoheadrightarrow, swap] \ar[r, " d_1"] \ar[rd, twoheadrightarrow] & B_{U_1, 1} \ar[rd, twoheadrightarrow] \ar[ld, twoheadrightarrow, crossing over, near end, swap] 
& \\
A_{U_1, 1}'' \ar[r, "d_1'' "] & B_{U_1, 1}''  & A_{U_1, 1}'  \ar[r, "d_1 ' "]  
  & B_{U_1, 1}'
\end{tikzcd}
\end{equation}
where the diagonal maps of two-term complexes are quasi-isomorphisms.  Consider the resolution of $\RR \pi_*(\cV_{U_1})$ given by
\[ [A_{U_1} \to B_{U_1}] := [A_{U_1, 1}\oplus A_{U_1, 2}''   \to  B_{U_1, 1} \oplus B_{U_1, 2} ''].\]
By composing the map $\tilde \alpha$ of Proposition~\ref{p:maps} with the left diagonal map of \eqref{e:roofdiagram}, we obtain
the map (which by abuse of notation we will also denote as $\tilde \alpha$)
\begin{align} \label{e:mapsf}
\tilde \alpha\colon &\Sym^k (A_{ U_1} ) \otimes B_{ U_1}  \to \cO_{ U_1}.
\end{align}
This map fits  into  the diagram \eqref{e:cha1} over $U_1$ and and realizes the map from Proposition~\ref{palphap} at the level of complexes.

%

Consider the morphism 
\[\tilde \tau\colon \tot(A_{U_1}) \to U_1\]
forgetting the section of $A_{U_1}$.  Define $E := \tilde \tau^*(B_{U_1})$.  The map $[A_{U_1} \to B_{U_1}]$ defines a section
\[\beta \in \Gamma(\tot(A_{U_1}), E)\]
and the map $\tilde \alpha$ of \eqref{e:mapsf} defines a cosection
\[\alpha \in \hom(E, \cO_{\tot(A_{U_1})}).\]
The function $\alpha(\beta)$ is equal to zero by Proposition~\ref{p:compz}.
We can therefore define the two-periodic complex $\{\alpha, \beta\}$ as in Definition~\ref{d:kf0}.


\subsection{The cut-down procedure}

The space $\tot(A_{U_1})$ lies over $U_1 \subset \tot(A_1 ')$, so the relative dimension of $\tot(A_{U_1}) \to \Bun_{g,n,d}^\circ$ is $\rank(A_{U_1}) + \rank(A_1 ')$.  If we view $E= \tilde \tau^*(B_{U_1})$ as an obstruction bundle, the relative virtual dimension over $ \Bun_{g,n,d}^\circ$ is 
\[\rank(A_{U_1}) - \rank(B_{U_1}) + \rank(A_1 ') = \chi(\RR \pi_*\cV)+ \rank(A_1 ')\]
which is too large by $\rank(A_1 ')$.  This overcounting is due to the fact that we resolved $\RR\pi_*(\cV_1)$ twice.\footnote{More precisely,  $[A_1' \to B_1']$ resolved $\RR \pi_*(\cV_1)$, and $[A_{U_1, 1} \to B_{U_1, 1}]$ resolved $\tau^*(\RR \pi_*(\cV_1))$.}
We must correct for this redundancy by choosing an appropriate closed subset of codimension equal to $\rank(A_1 ')$.

We have tautological sections $\taut \in \Gamma(\tot(A_{U_1}), \tilde \tau^*(A_{U_1, 1}))$ and $\taut ' \in \Gamma(U_1, A_{U_1,1}')$.  Let $f_1\colon A_{U_1, 1} \to A_{U_1, 1} '$ denote the surjective map from \eqref{e:roofdiagram}.  Consider the section
\[\xi := \tilde \tau^*(f_1) \circ \taut - \tilde \tau^*(\taut ') \in \Gamma(\tot(A_{U_1}), \tilde \tau^*(A_{U_1, 1} ')).\]
\begin{definition}
Define the substack of $\tot(A_{U_1})$ 
\[\square := \{\xi = 0\}.\]
\end{definition}
By definition of $\xi$, $\square$ consists of triples $(a_1 ', a_1, a_2)$ where $a_1 ' \in U_1 \subset \tot(A_1 ')$, $a_1 \in \tot(A_{U_1,1})|_{a_1 '}$, and $a_2 \in \tot(A_{U_1,2})|_{a_1 '}$ such that $f_1(a_1) = a_1 ' $.  Because $f_1$ is surjective, $\square$ will be smooth of relative dimension $\rank(A_{U_1})$ over $\Bun_{g, n, d}^\circ$.  

Let 
\[\iota\colon \square \hookrightarrow \tot(A_{U_1})\] denote the inclusion.  
\begin{definition}
Define the \emph{Fundamental two-periodic Koszul complex}
\[K_{g, n, d} := \iota^*(\{\alpha, \beta\}).\]
\end{definition}
The terms of $K_{g, n, d}$ consist of the wedge powers of $\iota^*(E)$.

\begin{proposition} The support of $K_{g,n,d}$ is contained in 
$ \sMbar_{g, n}(X_w, d).$
\end{proposition}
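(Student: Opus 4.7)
The plan is to reduce the support computation on $\square$ to one on $\tot(A_{U_1})$ and then to an explicit description in familiar terms. Since $K_{g,n,d} = \iota^*\{\alpha,\beta\}$, its support is contained in $\iota^{-1}\bigl(\op{supp}\{\alpha,\beta\}\bigr)$. By \cite[Lemma~1.4.1]{PV} the support of the two-periodic Koszul complex $\{\alpha,\beta\}$ is $Z(\beta) \cap \{\alpha \equiv 0\}$, so it suffices to verify the set-theoretic containment
$$\square \cap Z(\beta) \cap \{\alpha \equiv 0\} \subseteq \sMbar_{g,n}(X_w, d).$$

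Next I would analyze $\square \cap Z(\beta)$, mirroring the discussion preceding Corollary~\ref{c:sup1}. The vanishing of $\beta$ forces the tautological section $\taut = (\taut_1,\taut_2)$ of $\tilde\tau^*(A_{U_1,1} \oplus A_{U_1,2}'')$ to lie in the kernel $\pi_*\cV_{U_1}$ of $A_{U_1} \to B_{U_1}$. Thus $\taut_1$ is a section of $\pi_*(\cL_{U_1}^{\oplus r+1})$ and $\taut_2$ is a section of $\pi_*(\cL_{U_1}^{\otimes -k} \otimes \omega_\pi)$, i.e., a $p$-field. The condition $\xi = 0$ defining $\square$ additionally requires $f_1(\taut_1) = \taut' \in \tilde\tau^*\tau^*(A_1')$. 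Since $f_1$ is the degree-zero component of a chain map of two-term resolutions of $\pi_*\cV_{U_1,1}$, it sends the kernel subsheaf $\pi_*\cV_{U_1,1} \subset A_{U_1,1}$ into the kernel subsheaf $\pi_*\cV_{U_1,1} \subset A_{U_1,1}'$. Hence $\taut'$ must lie in $\pi_*\cV_1 \subset A_1'$ at the corresponding point of $U_1$; combined with the open condition defining $U_1$ (nowhere-vanishing of the tautological section of $\cM^{\oplus M}$), this forces the underlying point to lie in $\sMbar_{g,n}(\PP^r, d) \subset U_1$. Together with the $p$-field data $\taut_2$, this exhibits $\square \cap Z(\beta)$ as a substack of $\sMbar_{g,n}(\PP^r, d)^p$.

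Finally, I would apply the analogue of Corollary~\ref{c:sup1} in the present setting: by Proposition~\ref{p:supp0}, the cosection $\alpha$ restricted to $Z(\beta)$ factors through the surjection $\tilde\tau^* B_{U_1} \twoheadrightarrow \tilde\tau^* \RR^1\pi_*\cV_{U_1}$ followed by $\widetilde{d\hat w}$. Hence the locus $\{\alpha \equiv 0\}$ inside $\square \cap Z(\beta) \subseteq \sMbar_{g,n}(\PP^r, d)^p$ coincides with $\{d\hat w \equiv 0\}$, which Section~\ref{ss:iw} identifies with $\sMbar_{g,n}(X_w, d)$ (using that $w$ has an isolated singularity at the origin). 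This gives the desired containment, and therefore the support of $K_{g,n,d}$ lies in $\sMbar_{g,n}(X_w, d)$.

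The main subtlety, to my mind, is the identification $\square \cap Z(\beta) \subseteq \sMbar_{g,n}(\PP^r, d)^p$ in the middle step. The argument above is clean once one knows that $f_1$ is the degree-zero component of a genuine chain map of the complexes $[A_{U_1,1} \to B_{U_1,1}] \to [A_{U_1,1}' \to B_{U_1,1}']$; if the roof diagram \eqref{e:roofdiagram} is realized only up to homotopy, additional care would be required to extract the set-theoretic statement about kernels. The remaining steps amount to bookkeeping, transporting the cosection arguments of Sections~\ref{ss:iw} and~\ref{ss:vc0} through the two-step resolution setup.
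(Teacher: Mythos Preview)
Your proof is correct and follows essentially the same route as the paper. The paper's argument is terser: it invokes \cite[Lemma~1.4.1]{PV} to identify $\op{supp}(K_{g,n,d}) = Z(\iota^*\beta) \cap \{\iota^*\alpha \equiv 0\}$, asserts without elaboration that $Z(\iota^*\beta) = \sMbar_{g,n}(\PP^r,d)^p$ inside $\square$, and then applies Corollary~\ref{c:sup1}. Your middle step supplies exactly the details the paper omits---tracing how $\xi = 0$ together with $\beta = 0$ forces $\taut' \in \ker(d_1') = \pi_*\cV_1$, hence a point of $\sMbar_{g,n}(\PP^r,d)$---so the two arguments are the same in substance. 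Your worry about the roof diagram is unfounded in this instance: the diagonal maps in \eqref{e:roofdiagram} are genuine surjective chain maps (this is how the standard lemma producing such roofs works, cf.\ \cite[Lemma~3.6.5]{CFFGKS}), so $f_1$ honestly commutes with the differentials and carries kernels to kernels.
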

\begin{proof}
By \cite[Lemma~1.4.1]{PV}, 
\[\op{supp}\left( K_{g,n,d} \right) = Z(\iota^*(\beta)) \cap \{\iota^*(\alpha)
 \equiv 0\}. \]  The locus $Z(\iota^*(\beta))$ is equal to 
 $ \sMbar_{g, n}(\PP^r, d)^p$
after restricting  to $\square$ in $\tot(A_{U_1})$ .   By Corollary~\ref{c:sup1}, the intersection of this with $\{\iota^*(\alpha)
 \equiv 0\}$ is equal to  
  \[\sMbar_{g, n}(\PP^r, d)^p \cap \{d\hat w \equiv 0\} =\sMbar_{g, n}(X_w, d).\]

\end{proof}

\begin{definition}\label{d:vcf} Let 
$\sM_{X_w} = \sMbar_{g, n}(X_w, d)$.
Define the virtual class
\[
\left[ \sMbar_{g, n}(\PP^r, d)^p \right]^{\vir} := {}^{\ZZ_2} \ch^{\square}_{\sM_{X_w}}(K_{g,n,d}) (\td(\iota^*(E))) \in A_*(\sMbar_{g, n}(X_w, d)).\]
\end{definition}
Definition~\ref{d:vcf} gives a new construction of the virtual fundamental class on the space of stable maps to a hypersurface, using a generalization of the refined Euler class to two-periodic complexes.
We state without proof two comparison results to show that this class agrees with previous constructions of the virtual fundamental class.

\begin{proposition}\cite[Proposition 6.1.7]{CFFGKS}
The virtual class $\left[ \sMbar_{g, n}(\PP^r, d)^p \right]^{\vir}$ agrees with the \emph{cosection-localized} virtual fundamental class defined by Chang--Li in \cite{CL}.
\end{proposition}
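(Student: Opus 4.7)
The plan is to reduce the statement to a combination of two ingredients: an obstruction theory comparison in the spirit of Proposition~\ref{p:agrees}, together with a formal identity expressing the cosection-localized virtual class as the $\ZZ_2$-localized Chern character of a two-periodic Koszul complex. The latter should be viewed as a cosection-enhanced analog of Fulton's identity~\eqref{e:EC2}.

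First, I would verify that both constructions are built from the same perfect obstruction theory equipped with the same cosection. The Chang--Li class on $\sMbar_{g,n}(\PP^r,d)^p$ is based on the relative perfect obstruction theory $(\RR\pi_*(f^*T\PP^r \oplus f^*\cO(-k)\otimes \omega_\pi))^\vee$, together with the cosection of the obstruction sheaf induced by $d\hat w$ (as in \cite{CL}). By arguing as in Proposition~\ref{p:agrees}, one checks that after restriction to $Z(\iota^*(\beta)) = \sMbar_{g,n}(\PP^r,d)^p$, the data $(\iota^*(E),\iota^*(\beta))$ on $\square$ defines exactly this relative perfect obstruction theory, and that under this identification the cosection $\iota^*(\alpha)$ corresponds to $d\hat w$. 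The matching of cosections is already essentially contained in Proposition~\ref{p:supp0}: the map $\tilde\alpha$ factors through $\widetilde{d\hat w}$ on $H^1$, which is the cohomology sheaf that provides the obstruction sheaf.

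Second, I would establish a general identity (or invoke one from \cite{PV,CFFGKS}) stating the following: given a smooth Deligne--Mumford stack $X$, a vector bundle $E\to X$ with a section $\sigma$ and a cosection $\rho$ with $\rho(\sigma)=0$, the cosection-localized virtual class associated to the ``basic model'' diagram \eqref{e:basicex} equipped with $\rho$ is computed by
\[{}^{\ZZ_2}\ch^X_{Z(\sigma)\cap\{\rho\equiv 0\}}(\{\rho,\sigma\})(\td(E)).\]
In the absence of a cosection, this specializes to the classical formula \eqref{e:EC2} identifying the refined Euler class with $\ch^X_Z(K)(\td(E))$. The key point is that replacing the Koszul complex by the two-periodic Koszul complex of Definition~\ref{d:kf0} corresponds, on the level of virtual classes, to applying Kiem--Li cosection localization. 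Once this identity is in hand, the relative version over a base stack (analogous to the relative version of Behrend--Fantechi used in Proposition~\ref{p:agrees}) yields the result on $\square$.

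The main obstacle will be dealing with the two-step procedure and its cut-down locus $\square$, since the Chang--Li construction lives directly on $\sMbar_{g,n}(\PP^r,d)^p$ without any auxiliary doubling of resolutions. I would handle this by showing that the projection $\square \to U_1$ is smooth of relative dimension $\rank(A_{U_1}) - \rank(A_1')$, that the restriction $\iota^*$ commutes with the $\ZZ_2$-localized Chern character construction, and that the codimension-$\rank(A_1')$ section $\xi$ precisely absorbs the redundancy introduced by resolving $\RR\pi_*\cV_1$ twice. Combining this compatibility with the obstruction theory identification and the cosection-localized Fulton identity above yields the equality of the class in Definition~\ref{d:vcf} with the Chang--Li cosection-localized virtual fundamental class.
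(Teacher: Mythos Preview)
Your proposal is essentially aligned with the paper's approach, though the paper does not actually give a proof here: it states the proposition with a citation to \cite{CFFGKS} and offers only a one-sentence hint, namely that the proof uses results of \cite{CLL} together with the compatibility of $\tilde\alpha$ and $\widetilde{d\hat w}$ from Proposition~\ref{p:supp0}. Your two main ingredients---the cosection-enhanced analog of~\eqref{e:EC2} and the identification of cosections via Proposition~\ref{p:supp0}---are exactly these two inputs: the former is what comes from \cite{CLL}, and the latter is explicitly named in the paper's hint. Your additional discussion of the obstruction-theory matching (in the style of Proposition~\ref{p:agrees}) and of handling the cut-down locus $\square$ goes beyond what the paper sketches but is in the right spirit and would be needed to flesh out the argument in full.
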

The proof uses results of \cite{CLL} together with the compatibility of $\tilde \alpha$ and $\widetilde{d \hat w}$ described in Proposition~\ref{p:supp0}.

On the other hand, the cosection-localized virtual fundamental class has been shown to agree with the original virtual fundamental class of Behrend-Fantechi.

\begin{proposition}\cite{CL, KO, CLcomp}
The cosection-localized virtual fundamental of \cite{CL} is equal to the Behrend--Fantechi virtual fundamental class up to a sign of $(-1)^{\chi(\cL^{\otimes k})}$.
\end{proposition}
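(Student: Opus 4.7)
The plan is to string together two comparison results. First, to identify the Chang--Li cosection-localized class with the Behrend--Fantechi virtual class on the space of stable maps to $X_w$, we would set up three perfect obstruction theories and relate them. On $\sMbar_{g,n}(X_w, d)$ we have the standard Behrend--Fantechi POT $(\RR\pi_*f^*TX_w)^\vee$. On $\sMbar_{g,n}(\PP^r,d)^p$ the natural relative POT (over the Artin stack $\mathfrak{M}_{g,n}$ of prestable curves) is $(\RR\pi_*(f^*T\PP^r \oplus \cL^{\otimes -k}\otimes \omega_\pi))^\vee$, where the second summand accounts for the $p$-field. By Euler's homogeneous function theorem (as used in Proposition~\ref{p:supp0}), the function $\hat w = yw$ induces a cosection of the corresponding obstruction sheaf whose vanishing locus is exactly $\sMbar_{g,n}(X_w, d) \subset \sMbar_{g,n}(\PP^r,d)^p$. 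Applying the Kiem--Li cosection localization principle then yields a localized virtual class on $\sMbar_{g,n}(X_w,d)$.

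The heart of the comparison is a Serre duality identification. The normal bundle short exact sequence
\begin{equation*}
0 \to TX_w \to T\PP^r|_{X_w} \to N_{X_w/\PP^r} = \cO_{\PP^r}(k)|_{X_w} \to 0
\end{equation*}
implies that $(\RR\pi_*f^*TX_w)^\vee$ and $(\RR\pi_*f^*T\PP^r)^\vee$ differ precisely by the contribution of $f^*\cO(k) = \cL^{\otimes k}$. Simultaneously, the $p$-field summand $\cL^{\otimes -k}\otimes \omega_\pi$ is Serre dual to $\cL^{\otimes k}$, so $\RR\pi_*(\cL^{\otimes -k}\otimes \omega_\pi) \simeq (\RR\pi_*\cL^{\otimes k})^\vee[-1]$. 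The content of the Chang--Li/Chang--Li--Li comparison is then to show that after applying cosection localization to the POT on $\sMbar_{g,n}(\PP^r,d)^p$, the ``extra'' bundle contribution cancels against the $p$-field contribution, leaving exactly the Behrend--Fantechi POT of $\sMbar_{g,n}(X_w,d)$. In terms of the construction of Section~\ref{s:ts}, this corresponds to verifying that the two-periodic Koszul complex $K_{g,n,d}$ computes the same refined Euler class as the one attached to the Behrend--Fantechi intrinsic normal cone after the cosection degeneration.

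The sign $(-1)^{\chi(\cL^{\otimes k})}$ is the principal bookkeeping obstacle and arises from the Serre duality step above. Dualizing $\RR\pi_*\cL^{\otimes k}$ and shifting by one interchanges $H^0$ and $H^1$ contributions; when one compares top Chern classes (or equivalently refined Euler classes) of the resulting two-term complex, the sign $(-1)^{\chi(\cL^{\otimes k})} = (-1)^{h^0 - h^1}$ appears as the determinant of this interchange. The formal framework for making this comparison precise is the axiomatic comparison of virtual classes in \cite{KO}, which reduces the problem to verifying that the POTs fit into a compatible diagram; the remaining calculation of the sign is carried out in \cite{CLcomp}.

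The main obstacle is the careful compatibility check: one must verify that the morphism $\tilde\alpha$ of Proposition~\ref{palphap} (or equivalently $\widetilde{d\hat w}$, by Proposition~\ref{p:supp0}) agrees, up to the Serre duality identification, with the cosection of the obstruction sheaf used by Chang--Li, and that this identification is compatible with the intrinsic normal cones on both sides. Once this is in place, the Kiem--Li localization theorem and the explicit Serre duality sign computation assemble into the stated equality.
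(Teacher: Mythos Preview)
The paper does not prove this proposition at all: it is stated purely as a citation of external results, followed by a two-sentence attribution note recording that Chang--J.~Li established the degree statement for the quintic with $n=0$, while Kim--Oh and Chang--M.-L.~Li subsequently proved the equality of virtual cycles themselves.

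Your proposal therefore goes well beyond what the paper attempts, by sketching the actual content of those cited arguments. The ingredients you identify are the right ones: the normal bundle sequence for $X_w \subset \PP^r$, the Serre duality identification $\RR\pi_*(\cL^{\otimes -k}\otimes\omega_\pi) \simeq (\RR\pi_*\cL^{\otimes k})^\vee[-1]$, and the resulting sign $(-1)^{\chi(\cL^{\otimes k})}$ from swapping the roles of $H^0$ and $H^1$. This is indeed the mechanism behind the comparison in \cite{CL, KO, CLcomp}. One small point: the sentence invoking $K_{g,n,d}$ and the construction of Section~\ref{s:ts} conflates this proposition with the preceding one. The comparison of the two-periodic Koszul class with the Chang--Li cosection-localized class is the content of the \emph{previous} proposition; the present statement is purely about the Chang--Li class versus the Behrend--Fantechi class and does not involve $K_{g,n,d}$ at all.
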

In \cite[Theorem~1.1]{CL}, Chang--J. Li prove that the degree of the virtual classes agree in the case of stable maps to a quintic three-fold with $n=0$.  In \cite[Theorem~3.2]{KO} and \cite[Theorem~1.1]{CLcomp}, Kim--Oh and Chang--M.-L. Li  prove that the virtual classes themselves agree.

\section{Conclusions and further directions}\label{s:conc}

In these notes we have used two-periodic Koszul complexes and the localized top Chern class to give
 a new construction of the virtual fundamental class for Gromov--Witten theory of a projective hypersurface.  This method was first developed by Polishchuk--Vaintrob in \cite{PVold}, where these ideas were employed to give a mathematical definition of the Witten top Chern class on the moduli space of spin curves.
%
 
 In fact the methods described above apply in greater generality than has been treated here.  The general context in which one might use such techniques is that of a \emph{gauged linear sigma model} (GLSM).  This includes both Gromov--Witten theory of a complete intersection as well as FJRW theory of a singularity as special cases.  
 For a detailed description of the mathematical theory of the GLSM, as well as a construction of corresponding enumerative invariants in certain  cases, see \cite{FJR15}.  
%
%
 
Although less familiar to mathematicians than Gromov--Witten theory, one place in which GLSMs arise naturally is through mirror symmetry and wall crossing correspondences with Gromov--Witten theory.  The most well-known example of this is the Landau--Ginzburg/Calabi--Yau correspondence of \cite{ChR, CIR, LPS}, relating Gromov--Witten theory and FJRW theory of the quintic.  An analogous result involving more exotic GLSMs, known as \emph{hybrid models}, appears in \cite{Clader}.
 
 In contrast to Gromov--Witten theory of hypersurfaces, for FJRW theory and general hybrid model GLSMs, a more general construction than the two-periodic Koszul complex is needed to define the corresponding enumerative theory.  In this level of generality, two-periodic complexes are replaced by so-called matrix factorizations.  Such a construction was accomplished first in FJRW theory in \cite{PV}.  It was generalized to hybrid model GLSMs in \cite{CFFGKS}.  

\bibliographystyle{amsplain} 

\bibliography{VC}{}

\end{document}